\newtheorem{theorem}{Theorem}
\newtheorem{lemma}{Lemma}
\newtheorem{corollary}{Corollary}
\newtheorem{condition}{Condition}
\newtheorem{assumption}{Assumption}
\theoremstyle{definition}
\theoremstyle{remark}
\newtheorem{remark}{Remark}
\newcommand\numberthis{\addtocounter{equation}{1}\tag{\theequation}}
\def\bk{\bar \kappa}
\def\dk#1{{\color{black}#1}} 
\def\aa#1{{\color{black}#1}} 
\begin{document}
\title{ Inexact Tensor Methods  and Their Application to Stochastic Convex Optimization
}

\author{
\name{Artem Agafonov\textsuperscript{1, 2}, Dmitry Kamzolov\textsuperscript{2, 1}, Pavel Dvurechensky\textsuperscript{3}, Alexander Gasnikov\textsuperscript{1, 4, 5}, and Martin Tak\'a\v{c}\textsuperscript{2}}
\affil{\textsuperscript{1}Moscow Institute of Physics and Technology, Dolgoprudny, Russia;
\textsuperscript{2}Mohamed bin Zayed University of Artificial Intelligence, Abu Dhabi, UAE;
\textsuperscript{3}Weierstrass Institute for Applied Analysis and Stochastics, Berlin, Germany;
\textsuperscript{4}Institute for Information Transmission Problems, Moscow, Russia;
\textsuperscript{5}HSE University, Moscow, Russia}
}



\maketitle

\begin{abstract}
    We propose general non-accelerated\footnote{The results for non-accelerated methods first appeared in December 2020 in the preprint \cite{agafonov2020inexact}.} and accelerated tensor methods under inexact information on the derivatives of the objective, analyze their convergence rate. Further, we provide conditions for the inexactness in each derivative that is sufficient for each algorithm to achieve a desired accuracy. As a corollary, we propose  stochastic tensor methods for convex optimization and obtain sufficient mini-batch sizes for each derivative.  
\end{abstract}

\begin{keywords}
high-order methods; tensor methods; inexact derivatives; convex optimization; stochastic optimization
\end{keywords}

\section{Introduction}
The idea of using the derivatives of the order $p$ higher than two in optimization methods is known at least since 1970's \cite{hoffmann1978higher-order}. In numerical analysis it was used much earlier, see the student work of P.L.~Chebyshev \cite{Tchebyshev1951} and more recent reviews \cite{evtushenko2013,Trefethen2019approximation}. Despite theoretical advantages, the practical application of such tensor methods was limited until the recent work \cite{nesterov2019implementable} since each iteration of such methods includes minimization of a polynomial with degree larger than 3, which may be non-convex even for convex optimization problems. As it was shown in \cite{nesterov2019implementable}, if the minimization problem is convex, then in each iteration of the tensor method it is sufficient to minimize a convex polynomial, and, for $p=3$, this can be done with approximately the same cost as the step of the cubic-regularized Newton's method in the convex case \cite{nesterov2006cubic}. This leads to a method with faster convergence rate than that of the accelerated cubic-regularized Newton's method, and with approximately the same cost of each iteration. These ideas were further developed to obtain accelerated versions of tensor methods \cite{gasnikov2019near,nesterov2020inexact5} and very fast second-order methods \cite{nesterov2020superfast,kamzolov2020near,nesterov2020inexact5}. 

At the same time, many optimization problems in machine learning or image analysis have the form of minimization of a finite sum of functions and are solved by second-order methods \cite{zhang2015DiSCO,rodomanov2016superlinearly}. A more general setting,for which the finite-sum setting is a special case, is the setting of general stochastic optimization, for which second-order methods are also developed in the literature \cite{tripuraneni2018stochastic} for non-convex problems. Tensor methods for such problems are also developed, but with the focus on non-convex problems \cite{bellavia2018adaptive,lucchi2019stochastic}.  Thus, motivated by the lack of results on tensor methods for stochastic convex optimization, in this paper, we study stochastic convex optimization problems and develop tensor methods for this setting. We do this by a more general framework of Inexact Tensor Methods, which use inexact values of higher-order derivatives. First, we analyze such methods under a particular condition on the inexactness in the derivatives, and, then, we show how to satisfy this condition in the setting of stochastic optimization and propose a stochastic tensor methods for convex optimization. 

\subsection{Problem Statement}
We consider convex optimization problems of the following form
\begin{equation}
    \label{eq:PrSt}
    \min _{x \in \mathbb R^n} f(x)
\end{equation}
under inexact information on the derivatives of the objective $f$ up to order $p\geq 2$. We are motivated by two particular cases of this problem in the stochastic optimization setting, which we refer to as online and offline settings.
In the \textit{online setting} we assume that $f(x)$ is given as
\begin{equation}\label{eq:online_intro}
     f(x)=\mathbb{E}_{\xi \sim \mathcal{D}}[f(x ; \xi)],
\end{equation} 
where the random variable $\xi$ is sampled from a distribution $\mathcal{D}$, and an optimization procedure has access only to stochastic realizations of the function $f( x ; \xi)$ via its higher-order derivatives up to the order $p \ge 2$.
In the \textit{offline} setting the function $f$ has \textit{finite-sum} form: 
\begin{equation}\label{eq:offline_intro}
     f(x) = \frac1m \sum \limits_{i = 1}^m f_i(x)
\end{equation}
with available derivatives of the order $p \ge 2$ for each function $f_i$. Clearly, the offline setting can be considered as a particular case of the online setting if we set $\xi$ to be uniformly distributed over $i=1,...,m$. At the same time, we distinguish these two settings since the analysis of the proposed methods in the second setting can be made under weaker assumptions.

\subsection{Related Work}

\subsubsection{Second-order methods}
Beyond first-order methods, the most developed methods are maybe second-order methods, among which the closest to our setting are cubic-regularized Newton's methods originating from \cite{nesterov2006cubic}. This line of works includes the development of accelerated methods \cite{nesterov2008,monteiro2013accelerated,nesterov2018lectures}, extensions of trust region methods \cite{conn2000trust,cartis2011adaptive,cartis2011adaptive2,cartis2017improved,cartis2019universal},
methods with inexact evaluations of gradients and/or Hessians \cite{ghadimi2017secondorder,wang2018note,bellavia2020adaptive,doikov2020convex} with application to stochastic optimization in the online and offline settings. We especially point to \cite{ghadimi2017secondorder} which largely inspired our work.
Stochastic second-order methods for convex optimization  \cite{rodomanov2016superlinearly,kovalev2019stochastic,hanzely2020stochastic} and non-convex optimization \cite{tripuraneni2018stochastic,zhou2018stochastic,wang2020,park2020combining} have been extensively studied in the recent literature.
 The difference with our setting is that these works consider a particular case of $p=2$.

\subsubsection{Tensor methods}
To distinguish the general methods that use the derivatives up to the order $p > 2$, we refer to them as tensor methods. The idea of such methods was proposed quite long ago \cite{hoffmann1978higher-order}, and accelerated tensor methods for convex optimization were proposed in \cite{baes2009estimate}. Recent interest to this type of methods in convex optimization was probably motivated by the lower bounds obtained in \cite{arjevani2019oracle,agarwal2017lower}.
In \cite{nesterov2019implementable}, it was shown that appropriately regularized Taylor expansion of a convex function is also convex, which leads to implementability of such methods that minimize this regularized Taylor expansion. Accelerated tensor methods were also proposed in \cite{nesterov2019implementable}, yet with a remaining gap between the upper and lower complexity bounds. In the same paper, the author also shows how tensor methods with $p=3$ can be implemented by solving the auxiliary problem with the Bregman projected gradient method in the relative smoothness setting.
Near-optimal, i.e. with optimal up to a logarithmic factor iteration complexity, tensor methods for convex optimization were recently proposed in a number of works
\cite{gasnikov2018global,gasnikov2019reachability,gasnikov2019near,gasnikov2019optimal,bubeck2019near, jiang2019optimal,gasnikov2020book,nesterov2020inexact5,dvinskikh2020accelerated}.
These developments allowed to propose faster second-order methods via implementing third-order methods with inexact third derivative \cite{nesterov2020superfast,kamzolov2020near,nesterov2020inexact5}, which lead to an improvement of the complexity bound from $O({\e^{-1/3.5}})$ to $O({\e^{-1/5}})$.
Stochastic tensor methods were developed for non-convex smooth enough problems in \cite{bellavia2018adaptive,lucchi2019stochastic}. 
We are not aware of an analysis of tensor methods for stochastic convex smooth enough problems.

Our analysis is based on inexact versions of tensor methods for convex optimization, which use inexact derivatives of the order up to $p \geq 2$. First-order methods with inexact gradients are well-developed in the literature, see, for example, \cite{polyak1987introduction,d2008smooth,devolder2013exactness,dvurechensky2016stochastic,gasnikov2016stochasticInter,cohen2018acceleration,dvinskikh2019decentralized,stonyakin2019inexact} and references therein. Some results on inexact second-order methods are listed above. The paper \cite{nesterov2020inexact5} proposes an analysis of third-order method for convex optimization with inexact third derivative. In \cite{bellavia2018adaptive} the authors analyze inexact tensor methods for non-convex optimization.
The general theory of inexact tensor methods of an arbitrary order $p$ for convex problems still has to be developed and we make a step in this direction.

\subsection{Our contribution}
Motivated by stochastic optimization methods, we propose and analyze inexact tensor methods of the general order $p \geq 2$ for convex optimization problems. 
The idea of these algorithms is to use inexact values of the derivatives up to the order $p$ and to construct a regularized inexact Taylor expansion. More precisely, we assume that inexact derivative $G_{x, i}$ satisfies the following condition (see Condition \ref{cnd:sampling} below):
$$
\left\|\left(G_{x, i} - \nabla^i f(x)\right) [y - x]^{i-1}\right\| \leq \delta_i \|y-x\|^{i-1}, 
$$
for all $y$ and some $\delta_i \geq 0$, and $i=1, 2, \ldots, p$. Based on the $p$-th order inexact Taylor polynomial we build Inexact Tensor Model and use it to construct Inexact Tensor Method and its accelerated version. For the Inexact Tensor Method, we obtain the following convergence rate result:
$$
f(x_T) - f(x^{\ast}) \leq O \left(\sum \limits_{i=1}^p \frac{\delta_i}{T^{i-1}}\max_{x \in \mathcal{L}^\prime(x_0)}\|x-x^{\ast}\|^i + \frac{L_p }{T^p}\max_{x \in \mathcal{L}^\prime(x_0)}\|x-x^{\ast}\|^{p+1} \right),
$$ 
where $\mathcal{L}^\prime(x_0)$ is defined in \eqref{eq:lebesgue_prime}, $L_p$ is the Lipschitz constant of the $p$-th derivative, and $T$ is the iteration counter. 

For the Accelerated Inexact  Tensor Method we obtain the following convergence rate result:
$$
f(x_T) - f(x^{\ast}) \leq O \left( \delta_1 \bar{R}+ \sum \limits_{i=2}^p \frac{\delta_i}{T^{i}}\|x^*-x_0\|^i + \frac{L_p }{T^{p+1}}\|x^*-x_0\|^{p+1} \right),
$$
where $\bar{R}$ is defined in \eqref{eq:assumption_bound_x}.
Based on the above convergence rate estimates, we obtain conditions on the inexactnesses $\delta_i$, $i=1, 2, \ldots, p$ of the derivatives that are sufficient for the proposed inexact methods to have the same convergence rate as their exact counterparts.

As a corollary, we propose stochastic non-accelerated and accelerated tensor methods for stochastic convex optimization problems in the online and offline settings. The idea of the algorithms is to sample in each iteration mini-batches of derivatives up to the order $p$ and use them to construct a regularized inexact Taylor expansion. Using Hoeffding Tensor concentration inequality, we estimate mini-batch sizes for each derivative that are sufficient for the whole method to achieve accuracy $\e$ in the same number of iterations as the exact method requires.
For the non-accelerated method these mini-batch sizes are $O\left(\e^{-2(p+1-i) /p}\right)$, $i=1, 2, \ldots, p$ and for the  accelerated method they are $O(\e^{-2})$ for $i=1$ and $O\left(\e^{-2(p+1-i)/(p+1)}\right)$, $i=2, \ldots, p$. 
Interestingly, the higher is the derivative order, the smaller batch size turns out to be sufficient.

We also consider a particular case $p=3$, for which, in the spirit of \cite{nesterov2019implementable}, we describe how to implement the resulting inexact tensor methods.

\subsection{Paper Organization}
The remaining part of the paper is organized as follows. In Section \ref{sec:prel}, we introduce main notations and assumptions. Then, in Section \ref{sec:model} we present an inexact tensor model of the objective function, prove its convexity, and show that it majorizes the objective function. 
Section \ref{sec:method} is dedicated to the Inexact Tensor Method itself and its convergence rate analysis. 

Next, we introduce and analyze the Accelerated Inexact Tensor Method in Section \ref{sec:acceleration}. In Section $\ref{sec:implement}$, we discuss some implementation details. Finally, in Section $\ref{sec:smpl}$ we apply the general inexact methods to introduce Stochastic Tensor Methods. 

\section{Preliminaries}\label{sec:prel}
We denote the $i$-th directional derivative of function $f$ at $x$ along directions $s_1, \ldots, s_i \in \mathbb R^n$ as  
$$\nabla^i f(x)[s_1, \ldots, s_i].$$
For example, 
$\nabla f(x)[s_1] = \langle \nabla f(x), s_1\rangle$ and $\nabla^2 f(x)[s_1, s_2] = \langle \nabla^2 f(x)s_1, s_2\rangle$. If all directions are the same we write $\nabla^i f(x)[s]^i.$
For a $p$-th order tensor $T$, we denote by $\|T\|$ its tensor norm recursively induced \cite{cartis2017improved} by the Euclidean norm on the space of $p$-th order tensors: 
$$\|T\| = \max \limits_{\|s_1\| =  \ldots = \| s_p\| = 1} \{|T[s_1, \ldots, s_p]| \},$$
where we denote by $\|\cdot\|$ the standard Euclidean norm.
Throughout the paper we make the following assumption.
\begin{assumption}\label{as:lip}
    Function $f$ is convex,  $p$ times differentiable on $\mathbb{R}^n$, and its $p$-th derivative is  Lipschitz continuous, i.e. for all $
    x, y \in \mathbb{R}^n$
    $$\|\nabla^p f(x) - \nabla^p f(y)\| \leq L_p \|x - y\|.$$ 
\end{assumption}

Following the previous works, we construct tensor methods based on the $p$-th order Taylor approximation of the function $f(x)$, which can be written as follows: 
\begin{equation}\label{eq:taylor}
    \Phi_{x,p}(y) \stackrel{\text { def }}{=} f(x)+\sum_{i=1}^{p} \frac{1}{i !} \nabla^{i} f(x)[y-x]^{i}, \quad y \in \mathbb{R}^n.
\end{equation}
The full Taylor expansion of $f$ requires computing all the derivatives up to the order $p$, which can be expensive to calculate. Thus, it is natural to use some approximations $G_{x, i}$ for the derivatives $\nabla^i f\left(x\right)$, $i=1,\ldots,p$ and to construct an inexact $p$-th order Taylor expansion of the objective: 
\begin{equation}\label{eq:approx_taylor}
    \phi_{x,p}(y)=f\left(x\right)+\sum \limits_{i = 1}^p  \frac{1}{i!}G_{x, i}[y-x]^i, 
\end{equation}
where $G_{x, i}$ satisfies the following
\begin{condition}\label{cnd:sampling}
    Given the inexactness levels $\delta_i \geq 0$ for $i=1, \ldots, p$, for all $x \in \mathbb{R}^n$ the approximate derivatives $G_{x,i}$ satisfy for all $y \in \mathbb{R}^n$ the following inequalities:
    \begin{equation}\label{eq:sampling_cnd}
        \|(G_{x,i} - \nabla^i f({x}))[y - x]^{i-1}\| \le \delta_i \|y - x\|^{i-1}, i = 1, \ldots, p. \\
    \end{equation}
\end{condition}

Below, we first analyze inexact tensor methods under different conditions on the accuracy of the approximations for the derivatives. Then, motivated by stochastic optimization problems \eqref{eq:PrSt}, \eqref{eq:online_intro}, we focus on stochastic approximations of the derivatives
through sampling. More precisely, for $\mathcal{S}_1, \mathcal{S}_2, \ldots, \mathcal{S}_p$ being sample sets, we construct the sampled approximations as
\begin{equation}\label{eq:sampled_derivs}
\begin{aligned}
    {G}_{{x}, i} = \frac{1}{\left|\mathcal{S}_i\right|} \sum_{j \in \mathcal{S}_i} \nabla^i f\left({x}, \xi_j\right), \quad i=1,..,p.
    \end{aligned}
\end{equation}
In Section \ref{sec:smpl} we show how to choose the size of the sample sets $\mathcal{S}_i$ so that our algorithms achieve a desired accuracy with high probability. 

In our methods we use the following $p$-th-order prox functions: 
\begin{equation*}
    d_p(x) = \frac1p\|x\|^p, ~ p \geq 2.
\end{equation*}
Note that
\begin{equation*}
        \nabla d_{p}(x) =\|x\|^{p-2}  x, 
\end{equation*}
\begin{equation}\label{eq:ppf2deriv}
    \nabla^{2} d_{p}(x) =(p-2)\|x\|^{p-4}  x x^{*} +\|x\|^{p-2}  I  \succeq\|x\|^{p-2}  I , 
\end{equation}
where $I$ is the identity matrix in $\mathbb{R}^n$.

As it is shown, e.g. in \cite{nesterov2019implementable}, Assumption \ref{as:lip} allows to control the quality of the approximation of the objective $f$ by its Taylor polynomial: 
\begin{equation}\label{eq:taylor_bnd}
    |f({y}) - \Phi_{{x},p}(y)| \leq \frac{L_p}{(p+1)!}\|y - x\|^{p+1}, \;x, y \in  \mathbb{R}^n.
\end{equation}
If $p \geq 2$, the same can be done with the first and second derivatives:
\begin{equation}\label{eq:taylor_grad_bnd}
    \|\nabla f({y}) - \nabla \Phi_{{x},p}(y)\| \leq \frac{L_p}{p!}\|y - x\|^{p}, \; x, y \in  \mathbb{R}^n,
\end{equation}
\begin{equation}\label{eq:taylor_hess_bnd}
    \|\nabla^2 f({y}) - \nabla^2 \Phi_{x,p}(y)\| \leq \frac{L_p}{(p-1)!}\|y - x\|^{p-1}, \; x, y \in  \mathbb{R}^n.
\end{equation}

\section{Inexact Tensor Model}\label{sec:model}
In this section, we analyze inexact Taylor approximation \eqref{eq:approx_taylor} under Condition \ref{cnd:sampling}.
First, we obtain under  Condition \ref{cnd:sampling} approximation bounds in the spirit of \eqref{eq:taylor_bnd}, \eqref{eq:taylor_grad_bnd}, \eqref{eq:taylor_hess_bnd}. Then, based on these bounds, we construct a regularized inexact Taylor polynomial and show that it is a global upper bound for the objective function $f$. The latter is the key to construct the proposed Inexact Tensor Method, which we analyze in the next section. 

The following lemma gives a counterpart of \eqref{eq:taylor_bnd} when the inexact derivatives are used and shows that we can bound the residual between function $f$ and  the $p$-th order inexact Taylor polynomial $\phi_{x,p}(y)$ defined in \eqref{eq:approx_taylor}.
\begin{lemma}
    \label{lm:fun_bnd1}
     Assume that Condition \ref{cnd:sampling} is satisfied. Then, for any $x,y \in \mathbb{R}^n$, we have
    \begin{equation}
    \begin{split}\label{eq:func_bnd}
         |f( y) - \phi_{{x},p}(y)|  
         \leq \sum \limits_{i = 1}^p \frac{ \delta_i}{i!}\|y - x\|^i + \frac{L_p}{(p+1)!}\|y - x\|^{p+1}.
     \end{split}
\end{equation}
\end{lemma}
\begin{proof}
   For any $x, y \in \mathbb{R}^n$:
   $$|f( y) - \phi_{x,p}(y)| \leq |f(y) - \Phi_{x,p}(y)| + |\Phi_{x,p}(y) - \phi_{x,p}(y)| \stackrel{\eqref{eq:taylor_bnd}}{\leq} \frac{L_p}{(p+1)!}\|y - x\|^{p+1} + |\Phi_{x,p}(y) - \phi_{x,p}(y)|.$$
   Let us bound the second term in the right hand side of the inequality above:
   \begin{gather*}
       |\Phi_{x,p}(y) - \phi_{x,p}(y)|  \stackrel{\eqref{eq:taylor}, \eqref{eq:approx_taylor}}{=} \left\vert \sum \limits_{i = 1}^p \frac{1}{i!}({G}_{x, i} - \nabla^i f(x))[y - x]^i \right\vert \\
       \leq \sum \limits_{i = 1}^p \frac{1}{i!}\|({G}_{x, i} - \nabla^i f(x))[y - x]^{i-1}\|\|y - x\| \stackrel{\eqref{eq:sampling_cnd}}{\leq} \sum \limits_{i = 1}^p \frac{ \delta_i}{i!} \|y - x\|^i. 
   \end{gather*}
   Combining  both inequalities above, we finis the proof.
\end{proof}

The following lemma gives a counterpart of \eqref{eq:taylor_grad_bnd}, \eqref{eq:taylor_hess_bnd} for the inexact Taylor polynomial $\phi_{x,p}(y)$ defined in \eqref{eq:approx_taylor}.
\begin{lemma}
 \label{lm:grad_bnd1}
 Assume that Condition \ref{cnd:sampling} is satisfied. Then, for any $x,y \in  \mathbb{R}^n $, we have
\begin{equation}
    \begin{split}\label{eq:1deriv_bnd}
         \|\nabla f( y) - \nabla \phi_{x,p}(y)\| 
         \leq \sum \limits_{i = 1}^{p} \frac{ \delta_i}{(i-1)!}\|y - x\|^{i-1} + \frac{L_p}{p!}\|y - x\|^{p},
    \end{split}
\end{equation}
\begin{equation}
    \begin{split}\label{eq:2deriv_bnd}
         \|\nabla^2 f( y) - \nabla^2 \phi_{x,p}(y)\| \leq \sum \limits_{i = 2}^{p} \frac{\delta_i}{(i - 2)!}\|y - x\|^{i-2} + \frac{L_p}{(p-1)!}\|y - x\|^{p-1},
    \end{split}
\end{equation}
where we use the standard convention $0!=1$.
\end{lemma}
\begin{proof}
    First, we prove the bound for the first derivatives. For any $x, y \in \mathbb{R}^n$:
    \begin{gather*}
        \|\nabla f( y) - \nabla \phi_{x,p}(y)\| \leq \|\nabla f(  y ) - \nabla \Phi_{x,p}(y)\| + \|\nabla \Phi_{x,p}(y) - \nabla \phi_{x,p}(y)\| \\
        \stackrel{\eqref{eq:taylor_grad_bnd}}{\leq}  \frac{L_p}{p!}\|y - x\|^{p} + \|\nabla \Phi_{x,p}(y) - \nabla \phi_{x,p}(y)\|.
    \end{gather*}
    Let us bound the second term in the right hand side of the inequality above: 
    \begin{gather*}
       \|\nabla \Phi_{x,p}(y) - \nabla \phi_{x,p}(y)\|  \stackrel{\eqref{eq:taylor}, \eqref{eq:approx_taylor}}{=} \left\| \sum \limits_{i = 1}^p \frac{1}{(i - 1)!}({G}_{x, i} - \nabla^i f(x))[y - x]^{i-1} \right\| \\
       \leq \sum \limits_{i = 1}^p \frac{1}{(i-1)!}\|({G}_{x, i} - \nabla^i f(x))[y - x]^{i-1}\| \stackrel{\eqref{eq:sampling_cnd}}{\leq} \sum \limits_{i = 1}^p \frac{ \delta_i}{(i - 1)!}\|y - x\|^{i-1}. 
   \end{gather*}
   Combining  both inequalities above, we get $\eqref{eq:1deriv_bnd}$.
    
    Now, we obtain the bound for the second derivatives.  For any $x, y \in \mathbb{R}^n$, we have
    \begin{gather*}
        \|\nabla^2 f( y) - \nabla^2 \phi_{x,p}(y)\| \leq \|\nabla^2 f(y) - \nabla^2 \Phi_{x,p}(y)\| + \|\nabla^2 \Phi_{x,p}(y) - \nabla^2 \phi_{x,p}(y)\| \\
        \stackrel{\eqref{eq:taylor_hess_bnd}}{\leq}  \frac{L_p}{(p-1)!}\|y - x\|^{p-1} + \|\nabla^2 \Phi_{x,p}(y) - \nabla^2 \phi_{x,p}(y)\|.
    \end{gather*}
    Let us bound the second term in the right hand side of the inequality above:  
    \begin{gather*}
       \|\nabla^2 \Phi_{x,p}(y) - \nabla^2 \phi_{x,p}(y)\|  \stackrel{\eqref{eq:taylor}, \eqref{eq:approx_taylor}}{=} \left\| \sum \limits_{i = 2}^p \frac{1}{(i - 2)!}({G}_{x, i} - \nabla^i f(x))[y - x]^{i-2} \right\| \\
       \leq \sum \limits_{i = 2}^p \frac{1}{(i-2)!}\|({G}_{x, i} - \nabla^i f(x))[y - x]^{i-2}\| \leq \sum \limits_{i = 2}^p \frac{ \delta_i}{(i - 2)!}\|y - x\|^{i-2}, 
   \end{gather*}
   where the last inequality is valid due to conditions $\eqref{eq:sampling_cnd}$ and the definition of the tensor 
   norm. Indeed, for $i=2,\ldots,p$, we have  
   \begin{gather*}
     \|({G}_{x, i} - \nabla^i f(x))[y - x]^{i-2}\| = \max_{\|s_1\|=\|s_2\|=1} |({G}_{x, i} - \nabla^i f(x))[y - x]^{i-2}[s_1,s_2]| \\
     = \max_{y \in \mathbb{R}^n, y \ne x} \frac{|({G}_{x, i} - \nabla^i f(x))[y - x]^{i-2}[y - x,y - x]|}{\|y-x\|^2} \leq
   \max_{y \in \mathbb{R}^n, y \ne x} \frac{\|({G}_{x, i} - \nabla^i f(x))[y - x]^{i-1}\|}{\|y-x\|} \\
   \stackrel{\eqref{eq:sampling_cnd}}{\leq}  \delta_i\|y - x\|^{i - 2} 
   \end{gather*}
   Combining the above bounds, we obtain $\eqref{eq:2deriv_bnd}$.
   
\end{proof}

We finish this section by constructing a global upper bound for the objective $f$ based on the inexact Taylor expansion $\phi_{{x, p}}$ which is regularized by functions $d_i$, $i=2,...,p$.
\begin{theorem}\label{thm:model_cnvxty}
    Assume that Condition \ref{cnd:sampling} is satisfied. Then, for any $\gamma \in \mathbb{R}, ~ x, y \in \mathbb{R}^n$, and $\sigma \geq L_p$  the function
    \begin{equation}\label{eq:model}
    \omega_{{x, p}}(y) = \phi_{x,p}(y)  
      + \frac{\delta_1}{2\gamma} + \gamma \delta_1 d_2(y - x)  + \sum \limits_{i =  2}^p \frac{ \delta_i}{(i -  2)!} d_i(y - x)  + \frac{\sigma}{(p-1)!}d_{p+1}(y - x)
     \end{equation}  
     is convex and  majorizes the function $f$: 
     \begin{equation}\label{eq:model_major}
         f( y) \leq \omega_{{x, p}}(y), ~~ {x, y \in   \mathbb{R}^n}.
     \end{equation}
\end{theorem}
\begin{proof}

For any $x,y \in  \mathbb{R}^n$ and $h \in \R^n$:
    \begin{equation*}
        \begin{split}
             \left\langle\left(\nabla^{2} f( y)-\nabla^{2} \phi_{x,p}(y)\right) h, h\right\rangle & \leq \left\|\nabla^{2} f(y)-\nabla^{2} \phi_{x,p}(y)\right\| \cdot\|h\|^{2} \\ & \stackrel{\eqref{eq:2deriv_bnd}}{\leq}\left( \sum \limits_{i = 2}^{p} \frac{ \delta_i }{(i - 2)!}\|y - x\|^{i-2} + \frac{L_p}{(p-1)!}\|y - x\|^{p-1}\right)\|h\|^{2}. 
        \end{split}
    \end{equation*}
   Whence, using the convexity of $f$, we have
    \begin{equation}\label{eq:hessian_bound}
        0 \preccurlyeq \nabla^2 f(y) \preccurlyeq \nabla^{2} \phi_{x, p}(y) +
        \sum \limits_{i = 2}^p \frac{ \delta_i}{(i - 2)!}\|y - x\|^{i-2} I +  \frac{L_p}{(p-1)!}\|y - x\|^{p-1} I
    \end{equation}
     \begin{gather*}
        \overset{\eqref{eq:ppf2deriv}}{\preccurlyeq} \nabla^2 \phi_{x,p}(y) +\sum \limits_{i = 2}^p \frac{ \delta_i}{(i - 2)!} \nabla^{2} d_{i}(y-x) I +  \frac{\sigma}{(p-1)!}  \nabla^{2} d_{p+1}(y-x) I =
        \nabla^2  \tilde{\omega}_{{x, p}}(y),
    \end{gather*}
   where we used that $\sigma \geq L_p$ and defined 
   \begin{equation*}
       \tilde{\omega}_{{x, p}}(y)=\phi_{x,p}(y)  
     + \sum \limits_{i = 2}^p \frac{ \delta_i}{(i - 2)!}d_i(y - x)  + \frac{\sigma}{(p-1)!}d_{p+1}(y - x).
   \end{equation*}
Thus, we conclude that $\tilde{\omega}_{{x, p}}(y)$ is convex. Further, $\omega_{{x, p}}(y)=\tilde{\omega}_{{x, p}}(y) + \frac{\delta_1}{2\gamma} + \gamma \delta_1 d_2(y-x)$ is convex as a sum of three convex functions.

Finally, we prove inequality \eqref{eq:model_major}: 
    \begin{gather}
        f(y) \stackrel{\eqref{eq:func_bnd}}{\leq} \phi_{x,p}(y) +  \sum \limits_{i = 1}^p \frac{ \delta_i}{i!}\|y - x\|^i + \frac{L_p}{(p+1)!}\|y - x\|^{p+1} \notag  \\ 
        =  \phi_{x,p}(y)  + \delta_1\|y-x\| +  \sum \limits_{i = 2}^p \frac{\ \delta_i}{(i-1)!}d_i(y - x) + \frac{L_p}{p!}d_{p+1}(y - x) \notag \\
        \leq   \phi_{x,p}(y)  +  \delta_1\|y-x\| + \sum \limits_{i =  2}^p \frac{ \delta_i}{(i- 2)!}d_i(y - x) + \frac{\sigma}{(p-1)!}d_{p+1}(y - x) \notag \\ 
        \leq \phi_{x,p}(y)  + \frac{\delta_1}{2\gamma} + \gamma \delta_1 d_2(y-x) + \sum \limits_{i =  2}^p \frac{ \delta_i}{(i - 2)!}d_i(y - x) + \frac{\sigma}{(p-1)!}d_{p+1}(y - x)
        = \omega_{{x, p}}(y),\label{eq:model_constr}
    \end{gather}
    where the last inequality makes the model smooth by using $\|x\|\leq \frac{1}{2\gamma} + \frac{\gamma}{2}\|x\|^2$. We write Remark~\ref{remark1} for additional details.
\end{proof}

Thus, we have constructed a regularized inexact Taylor polynomial $\omega_{{x, p}}(y)$ defined in \eqref{eq:model} as a model of the objective $f(y)$. Theorem \ref{thm:model_cnvxty} claims that this model satisfies two main conditions: 
\begin{itemize}
    \item Model $\omega_{{x, p}}(y)$ is a global upper bound for the function $f$:
        \begin{equation}\label{eq:model_major_simple}
            f( y) \stackrel{\eqref{eq:model_major}}{\leq} \omega_{{x, p}}(y), \; x,y \in \mathbb{R}^n.
        \end{equation} 
    \item Model $\omega_{{x, p}}(y)$ is convex.
\end{itemize} 
In the next section we use the model $\omega_{{x, p}}(y)$ to construct Inexact Tensor Method.

\section{Inexact Tensor Method}\label{sec:method}
Based on the inexact regularized Taylor polynomial $\omega_{x, p}(y)$ defined in the previous section, in this section, we present the Inexact Tensor Method (ITM). Each step of this algorithm uses minimization of our model $\omega_{{x, p}}(y)$ to make a step from a point $x$. 
To be more precise, we define an operator $S(x)$ as
\begin{equation}\label{eq:argmin}
    S(x) = \argmin_{y \in \mathbb{R}^n} \omega_{{x, p}}(y).
\end{equation}
The resulting Inexact Tensor Method is listed as Algorithm \ref{alg:inexact}.

\begin{algorithm}
  \caption{Inexact Tensor Method (ITM)}\label{alg:inexact}
  \begin{algorithmic}[1]
    \STATE \textbf{Input:} convex function $f$ such that $\nabla^p f$ is  $L_p$-Lipschitz; $x_0$ is starting point; constant $\sigma \geq L_p$.
    \FOR{$t \geq 0$} 
    \STATE Call the inexact oracle to compute ${G}_{x_t, i}$ for $i = 1, \ldots, p$ such that Condition $\ref{cnd:sampling}$ is satisfied.
    \STATE Make the step:
        \begin{equation}\label{eq:tensor_prox_algo}
                x_{t + 1} = \argmin_{y \in \mathbb{R}^n} \omega_{{x_t, p}}(y).
        \end{equation}
    \ENDFOR
  \end{algorithmic}
\end{algorithm}
\aa{
In view of inequality \eqref{eq:model_major}, the process $S(x)$ guarantees that $f(S(x)) < f(x) + \frac{\delta_1}{2\gamma}$, i.e. Algorithm \ref{alg:inexact} is almost monotone, i.e. is monotone up to accuracy $\frac{\delta_1}{2\gamma}$. Indeed, 
$$f(x_{t+1}) \stackrel{\eqref{eq:model_major}}{\leq} {\omega}_{x_t, p}(x_{t+1}) \stackrel{\eqref{eq:tensor_prox_algo}}{\leq} {\omega}_{x_t, p}(x_{t}) = f(x_t) + \frac{\delta_1}{2\gamma}.$$
Therefore, for any iteration $T\geq 1$, we have $f(x_T) \leq f(x_0) + \frac{T\delta_1}{2\gamma}$. We define \begin{equation}\label{eq:lebesgue_prime}
    \mathcal{L}^\prime (x_0) = \lb x \vert f(x) \leq f(x_0) + \frac{T\delta_1}{2\gamma}\rb.
\end{equation}
}
\begin{remark}
\label{remark1}
    The approximate monotonicity is caused by our construction of the model $\omega_{{x, p}}(y)$ which uses in \eqref{eq:model_constr} the inequality $\|y - x\| \leq \frac{1}{2\gamma} + \frac{\gamma}{2}\|y - x\|^2$ which holds for any $\gamma >0$. This is needed to obtain a smooth model which is easier to minimize. Otherwise, we can construct a non-smooth model
    $$\bar{\omega}_{x, p}(y) = \phi_{x,p}(y) + \delta_1\|y-x\| + \sum\limits_{i=1}^p \frac{\delta_i}{(i-2)!}d_i(y-x)+\frac{\sigma}{(p-1)!}d_{p+1}(y-x)
    $$
    and minimize it in each iteration of the method. Such constructed modification of the ITM becomes monotone. Indeed,
    $$f(x_{t+1}) \leq \bar{\omega}_{x_t, p}(x_{t+1}) \leq \bar{\omega}_{x_t, p}(x_{t}) = f(x_t).$$
\end{remark}

The next technical result is used to prove the convergence rate theorem for ITM.
\begin{lemma} 
\label{Lm:ITM_technical}
Assume that Condition \ref{cnd:sampling} is satisfied. Then, for any $\gamma \in \mathbb{R}, ~ x \in  \mathbb{R}^n$, and $\sigma \geq L_p$, we have
    \begin{equation} \label{eq:min_lemma}
        f(S(x)) \leq \min \limits_{y \in \mathbb{R}^n} \left\{ f(y)    + \tfrac{\delta_1}{\gamma} + 2\gamma \delta_1 d_2(y-x) + \sum \limits_{i = 2}^p  \tfrac{i\delta_i}{(i-1)!} d_i(y - x) +  \tfrac{L_p + p\sigma }{ p!}d_{p+1}(y -x) \right\}.
    \end{equation}
\end{lemma}
\begin{proof}
\begin{gather*}
    f(S(x)) \stackrel{\eqref{eq:model_major_simple}}{\leq} \omega_{{x, p}}(S(x)) \\
    \stackrel{\eqref{eq:argmin},\eqref{eq:model}}{=}  \min \limits_{y \in \mathbb{R}^n} \left \{ \phi_{x,p}(y)  + \tfrac{\delta_1}{2\gamma} + \gamma \delta_1 d_2(y-x) + \sum \limits_{i = 2}^p \tfrac{ \delta_i}{(i- 2)!} d_i(y - x) + \tfrac{\sigma }{ (p-1)!}d_{p+1}(y - x) \right \} \\
    \stackrel{\eqref{eq:func_bnd}}{\leq}  \min \limits_{y \in \mathbb{R}^n} \left\{ f(y)   + \tfrac{\delta_1}{2\gamma} + \delta_1\|y-x\| + \gamma \delta_1 d_2(y-x) + \sum \limits_{i = 2}^p  \tfrac{i\delta_i}{(i-1)!}d_i(y - x) +  \tfrac{L_p + p\sigma }{p!}d_{p+1}(y -x) \right\} \\
    \stackrel{\|x\|\leq \frac{1}{2\gamma} + \frac{\gamma}{2}\|x\|^2}{\leq} \min \limits_{y \in \mathbb{R}^n} \left\{ f(y)   + \tfrac{\delta_1}{\gamma} + 2 \gamma \delta_1 d_2(y-x) + \sum \limits_{i = 2}^p  \tfrac{i\delta_i}{(i-1)!}d_i(y - x) +  \tfrac{L_p + p\sigma }{p!}d_{p+1}(y -x) \right\} .  
\end{gather*}
\end{proof}

We are now in a position to estimate the convergence rate of the Inexact Tensor Method.

\begin{theorem}\label{thm:convergence}
    Let $f$ be convex function and let Assumption \ref{as:lip} 
    and Condition \ref{cnd:sampling} hold. Let also $\sigma \geq L_p$. Then, for any $\gamma \in \mathbb{R}^n$, after $T+1$ iterations of Algorithm \ref{alg:inexact}, we have the following bound for the objective residual in problem \eqref{eq:PrSt}: 
    \begin{align}
            f(x_{{T}+1}) - f(x^{\ast})&\leq \frac{\delta_1}{\gamma}(T + 1) + \frac{\gamma\delta_1\aa{D}^2(p+1)^2}{T+p+1} \notag \\
            & + \sum \limits_{i = 2}^p \frac{\delta_i \aa{D}^i}{(i-1)!} \frac{(p+1)^{i}}{({T}+p+1)^{i-1}} 
             + \frac{(L_p + p\sigma )\aa{D}^{p+1}}{{(p+1)!}} \frac{(p+1)^{p + 1}}{({T}+p+1)^{p}}.  \label{eq:convergence}
    \end{align}
    By introducing $O(\cdot)$ notation and fixing $\gamma = O(T/\aa{D})$, we get
    \begin{equation}\label{eq:convergence_O}
            f(x_{{T}+1}) - f(x^{\ast})\leq
             \sum \limits_{i = 1}^p O\ls \frac{\delta_i \aa{D}^i}{T^{i-1}} \rs
             + O\ls\frac{L_p \aa{D}^{p+1}}{T^{p}}\rs,
    \end{equation}
    \aa{where}
    \begin{equation}
    \label{D_lebeg}
        \aa{D = \max\limits_{x\in \mathcal{L}^\prime(x_0)} \|x - x_0\|.}
    \end{equation}
   
\end{theorem}
\begin{proof}
    Applying Lemma \ref{Lm:ITM_technical} for any $t \geq 0$, we obtain
    \begin{equation*}
        \begin{split}
            f(x_{t+1}) \stackrel{\eqref{eq:min_lemma}}{\leq} 
            \min \limits_{y \in \mathbb{R}^n} \left\{ f(y)    + \frac{\delta_1}{\gamma} + 2\gamma \delta_1 d_2(y-x_t)   + \sum \limits_{i =  2}^p  \frac{i\delta_i }{(i-1)!}d_i(y - x_t) +  \frac{L_p + p\sigma }{ p !}d_{p+1}(y - x_t)   \right \}  \\
            \stackrel{\eqref{D_lebeg}}{\leq}  \min \limits_{\alpha_t \in [0, 1]} \left \{ f(x_{t} + \alpha_t (x^{\ast} - x_t)) + \frac{\delta_1}{\gamma} + \gamma \delta_1 (\alpha_t \aa{D})^2 + \sum \limits_{i = 2}^p \frac{\delta_i }{(i-1)!}(\alpha_t \aa{D})^i +  \frac{L_p + p\sigma }{{(p+1)}!}(\alpha_t \aa{D})^{p+1}   \right\} \\
            \leq \min \limits_{\alpha_t \in [0, 1]} \left \{ (1-\al_t)f(x_t) + \alpha_t f(x^{\ast}) + \frac{\delta_1}{\gamma} + \gamma \delta_1 (\alpha_t \aa{D})^2 + \sum \limits_{i = 2}^p\frac{\delta_i }{(i-1)!}(\alpha_t \aa{D})^i +  \frac{L_p + p\sigma }{{(p+1)}!}(\alpha_t \aa{D})^{p+1}  \right\}.
        \end{split}
    \end{equation*}
    Therefore, subtracting $f(x^{\ast})$ from both sides, we obtain, for any $\alpha_t \in [0,1]$
        \begin{equation}\label{eq:conv_to_sum}
            \begin{split}
                f(x_{t+1}) - f(x^{\ast}) &\leq (1-\alpha_t)(f(x_t) - f(x^{\ast})) \\
                &+ \frac{\delta_1}{\gamma} + \gamma \delta_1 (\alpha_t \aa{D})^2 + \sum \limits_{i = 2}^p \frac{\delta_i }{(i-1)!}(\alpha_t \aa{D})^i +  \frac{L_p + p\sigma }{{(p+1)!}}(\alpha_t \aa{D})^{p+1}.
            \end{split}
        \end{equation}
        Let us define the sequence $\{A_t\}_{t\geq 0}$ as follows:
        \begin{equation}
            A_t = 
                \begin{cases}
                    1 , t = 0\\
                    \prod \limits_{i=1}^t (1-\alpha_i),~ t\geq 1.
                \end{cases}
        \end{equation}
        Then, $A_t = (1-\alpha_t)A_{t-1}$. Also, we define $\alpha_0 = 1$.
        Then, dividing both sides of \eqref{eq:conv_to_sum} by $A_t$, we get
        \begin{gather}
            \frac{1}{A_t}\left(f(x_{t+1}) - f(x^{\ast})\right)\leq \frac{1}{A_t} \left(1-\alpha_t)(f(x_t) - f(x^{\ast}) \right) \notag\\
             + \frac{1}{A_t} \left(\frac{\delta_1}{\gamma} + \gamma\delta_1(\alpha_t \aa{D})^2 + \sum \limits_{i = 2}^p \frac{\delta_i }{(i-1)!}(\alpha_t \aa{D})^i +  \frac{L_p + p\sigma }{{(p+1)!}}(\alpha_t \aa{D})^{p+1}\right)\notag\\
            = \frac{1}{A_{t-1}}(f(x_t) - f(x^{\ast})) + \frac{1}{A_{t}}\left( \frac{\delta_1}{\gamma} + \gamma \delta_1 (\alpha_t \aa{D})^2 + \sum \limits_{i = 2}^p \frac{\delta_i }{(i-1)!}(\alpha_t \aa{D})^i +  \frac{L_p + p\sigma }{{(p+1)!}}(\alpha_t \aa{D})^{p+1}\right)\label{eq:summing}.
        \end{gather}
         Summing  both sides of inequality \eqref{eq:summing} for $t = 0, \ldots, T$ , we obtain:
       \begin{gather}
            \frac{1}{A_T}(f(x_{{T}+1}) - f(x^{\ast}))  \leq \frac{(1-\alpha_0)}{A_0}(f(x_0) - f(x^{\ast})) \notag \\
            + \left(\frac{\delta_1}{\gamma} \sum \limits_{t=0}^{T} \frac{ 1}{A_t} + \gamma \delta_1\aa{D}^2 \sum \limits_{t=0}^{T} \frac{ \alpha_t^2}{A_t} + 
             \sum \limits_{i = 2}^p \frac{\delta_i \aa{D}^i}{(i-1)!} {\sum \limits_{t=0}^{T} \frac{ \alpha_t ^{i}}{A_t}} 
             + \frac{(L_p + p\sigma )\aa{D}^{p+1}}{{(p+1)!}}  {\sum \limits_{t=0}^T \frac{ \alpha_t ^{p+1}}{A_t}} \right)  \notag\\
              \stackrel{\alpha_0=1}{=} 
              \frac{\delta_1}{\gamma} \sum \limits_{t=0}^{T} \frac{ 1}{A_t} + \gamma \delta_1\aa{D}^2 \sum \limits_{t=0}^{T} \frac{ \alpha_t^2}{A_t}  + \sum \limits_{i = 2}^p \frac{\delta_i \aa{D}^i}{(i-1)!} {\sum \limits_{t=0}^{T} \frac{ \alpha_t ^{i}}{A_t}} 
             + \frac{(L_p + p\sigma )\aa{D}^{p+1}}{{(p+1)!}}  {\sum \limits_{t=0}^T \frac{ \alpha_t ^{p+1}}{A_t}}.\label{eq:last} 
        \end{gather}
        Let us take 
        \begin{equation}
        \label{alpha_t}
            \alpha_t = \frac{p+1}{t+p+1}, ~ t \geq 1.
        \end{equation}
        Then, we have
        \begin{equation}\label{eq:A_t_bound}
            A_{T} =\prod_{t=1}^{T}\left(1-\alpha_{t}\right)=\prod_{t=1}^{T} \frac{t}{t+p+1}=\frac{T !(p+1) !}{(T+p+1) !}=(p+1) ! \prod_{j=1}^{p+1} \frac{1}{T+j} \leq \frac{(p+1)!}{(T+1)^{p+1}},
         \end{equation}
        which gives, for all $i =0,\ldots,p+1$,
        \begin{equation}
        \label{eq:sum_A_t_bound}
            \begin{aligned}
                \sum_{t=0}^{T} \frac{A_{T} \alpha_{t}^{i}}{A_{t}} &=\sum_{t=0}^{T} \frac{(p+1)^{i}}{(t+p+1)^{i}} \prod_{j=1}^{p+1}\frac{t+j}{T+j} =(p+1)^{i} \prod_{j=1}^{p+1} \frac{1}{T+j} \sum_{t=0}^{T}  \frac{\prod_{j=1}^{p+1} (t+j)}{(t+p+1)^i}.
            \end{aligned}
        \end{equation}
        To estimate the last sum we first prove that the elements of the sum are non-decreasing. Indeed, we have
        \begin{equation*}
            1 \leq 1 +  \frac{1}{t+p+1} \leq  1 + \frac{1}{t+j}, \quad \forall j \in [1, \ldots, p],
        \end{equation*}
        and, hence, for all $i =1,\ldots,p+1$,
        \begin{align*}
        \ls 1 +  \frac{1}{t+p+1}\rs^i \leq \prod_{j=1}^{p+1} \ls 1 + \frac{1}{t+j} \rs \\
        \Leftrightarrow
        \ls\frac{t+p+2}{t+p+1}\rs^i \leq \prod_{j=1}^{p+1} \frac{t+j+1}{t+j}\\
          \Leftrightarrow  \frac{\prod_{j=1}^{p+1} (t+j)}{(t+p+1)^i} \leq \frac{\prod_{j=1}^{p+1} (t+1+j)}{(t+p+2)^i} 
        \end{align*}
 
        Thus, we have shown that the summands in the RHS of \eqref{eq:sum_A_t_bound} are growing, whence we get the next upper bound for the sum, for all $i =0,\ldots,p+1$,
        \begin{equation}
            \label{eq:ATat/At_i0}
            \begin{aligned}
            \sum_{t=0}^{T} \frac{A_{T} \alpha_{t}^{i}}{A_{t}} 
            &=(p+1)^{i} \prod_{j=1}^{p+1} \frac{1}{T+j} \sum_{t=0}^{T}  \frac{\prod_{j=1}^{p+1} (t+j)}{(t+p+1)^i}\\
            &\leq (p+1)^{i} \prod_{j=1}^{p+1} \frac{1}{T+j}\cdot (T+1) \cdot  \frac{ \prod_{j=1}^{p+1} (T+j)}{(T+p+1)^i}
            \leq\frac{(T+1)(p+1)^{i}}{(T+p+1)^{i}}.
            \end{aligned}
        \end{equation}
        
        Additionally, for all $i = 1,\ldots,p+1$,
        \begin{equation}
        \label{eq:ATat/At}
            \sum_{t=0}^{T} \frac{A_{T} \alpha_{t}^{i}}{A_{t}} \leq\frac{(T+1)(p+1)^{i}}{(T+p+1)^{i}} \leq\frac{(p+1)^{i}}{(T+p+1)^{i-1}}
        \end{equation}
        
        From \eqref{eq:last} we get the statement of the Theorem:
        \begin{align*}
            f(x_{{T}+1}) - f(x^{\ast}) &\leq  \frac{\delta_1}{\gamma}\sum \limits_{t=0}^{T}\frac{A_T}{A_t} + \gamma\delta_1 \aa{D}^2\sum \limits_{t=0}^{T}\frac{A_T \alpha_t^2}{A_t} \\
            &+ \sum \limits_{i = 2}^p \frac{\delta_i \aa{D}^i}{(i-1)!} {\sum \limits_{t=1}^{T} \frac{A_T \alpha_t ^{i}}{A_t}} 
             + \frac{(L_p + p\sigma )\aa{D}^{p+1}}{{(p+1)!}}  {\sum \limits_{t=1}^T \frac{A_T \alpha_t ^{p+1}}{A_t}}\\
             &\stackrel{\eqref{eq:ATat/At_i0},\eqref{eq:ATat/At}}{\leq}
             \frac{\delta_1}{\gamma}(T + 1) + \frac{\gamma\delta_1\aa{D}^2(p+1)^2}{T+p+1} \\
             &+ \sum \limits_{i = 2}^p \frac{\delta_i \aa{D}^i}{(i-1)!} \frac{(p+1)^{i}}{({T}+p+1)^{i-1}} 
             + \frac{(L_p + p\sigma )\aa{D}^{p+1}}{{(p+1)!}} \frac{(p+1)^{p + 1}}{({T}+p+1)^{p}}.
        \end{align*}
        
    \aa{Next, one can set 
        $$\gamma = \frac{\sqrt{(T+1)(T+p+1)}}{D(p+1)} = O\ls\frac{T}{D}\rs.$$
        Then we obtain the following convergence rate
        \begin{gather*}
            f(x_{{T}+1}) - f(x^{\ast}) \leq 
            2(p+1)\delta_1D 
            + \sum \limits_{i = 2}^p \frac{\delta_i \aa{D}^i}{(i-1)!} \frac{(p+1)^{i}}{({T}+p+1)^{i-1}} 
             + \frac{(L_p + p\sigma )\aa{D}^{p+1}}{{(p+1)!}} \frac{(p+1)^{p + 1}}{({T}+p+1)^{p}} \\
             = \sum \limits_{i = 1}^p O\ls \frac{\delta_i \aa{D}^i}{T^{i-1}} \rs
             + O\ls\frac{L_p \aa{D}^{p+1}}{T^{p}}\rs.
        \end{gather*}
    }
\end{proof}

This result provides an upper bound for the objective residual after $T$ iterations of the Inexact Tensor Method. The last term in the RHS of \eqref{eq:convergence} and \eqref{eq:convergence_O} corresponds to the case of exact Tensor method, i.e. $\delta_i=0$, $i=1,...,p$, and provides a similar convergence rate to that of non-accelerated Tensor method in \cite{nesterov2019implementable}. The other terms in the RHS show how inexactness in each derivative influences the convergence rate. 
In particular, if, for some $i$-th derivative, the corresponding error $ \delta_i$ is non-zero, the convergence rate becomes worse since the convergence rate bound has term proportional to $\delta_i/T^{i-1}$. Importantly, when the error $\delta_1$ in the gradient is non-zero, our bound does not guarantee decrease of the objective below the error $O(\delta_1D)$. The same effect can be observed for first-order methods \cite{devolder2013exactness}. We now give sufficient conditions for the errors $\delta_i$, $i=1,\ldots,p$ such that the method is still guaranteed to find an $\e$-solution.
In particular, this result  answers the following question. Assume that the errors in the derivatives can be controlled and made as small as we would like them to be. Then, how small should we take the error in each derivative if we would like to achieve objective residual smaller than $\e$? As we will see below in Section \ref{sec:smpl}, for  stochastic optimization problems, this control over the errors can be achieved by increasing the sample size for each derivative.

\begin{corollary}\label{lm:inexactenss_nonacc}
Let assumptions of Theorem \ref{thm:convergence} hold and let $\e > 0$ be the desired solution accuracy.
Further, let the levels of inexactness in Condition \ref{cnd:sampling} satisfy the following inequalities:
\begin{equation}
    \label{eq_delta_1}
    \delta_1 \leq \frac{1}{2(p+1)^2}\frac{\e}{\aa{D}},
\end{equation}
\begin{equation}
\label{eq_delta_i}
    \delta_i \leq  \frac{(i-1)!}{(p+1)}\left(\frac{(L_p + p\sigma )}{p!}\right)^{\frac{i-1}{p}}\ls\frac{\e}{\aa{D}(p+1)}\rs^{\frac{p-i+1}{p}}, \quad \forall i \in [2, \ldots, p].
\end{equation}
Also, let the number of iterations $T$ of Algorithm \ref{alg:inexact} satisfy 
\begin{equation}
    \label{eq_T_bounds}
    T = \max \lb 1; \ls \frac{(p+1)^{p+1}}{{p!}}\frac{(L_p + p\sigma )\aa{D}^{p+1}}{\e}\rs^{\frac{1}{p}} - p - 1\rb,
\end{equation}
and $\gamma$ be chosen as
\begin{equation}
\label{gamma}
    \gamma = \frac{T+p+1}{\aa{D}(p+1)}=\ls \frac{p+1}{{p!}}\frac{(L_p + p\sigma )\aa{D}}{\e}\rs^{\frac{1}{p}}.
\end{equation}
Then $x_{T+1}$ is an $\e$-solution of problem \eqref{eq:PrSt}, i.e. $f(x_{T+1})-f(x^{\ast})\leq \e$. 
\end{corollary}

\begin{proof}
    First, by taking $\gamma$ from \eqref{gamma} we get
    \begin{align*}
    \frac{\delta_1}{\gamma}(T + 1) +
        \frac{\gamma\delta_1\aa{D}^2(p+1)^2}{T+p+1}\leq 2\delta_1  \aa{D}(p+1)\leq \frac{\e}{p+1}
    \end{align*}
    By our choice of  $\gamma, ~ T, ~ \delta_i, ~ i=1, \ldots, p$ we have:
    \begin{gather*}
        \frac{\delta_i \aa{D}^i}{(i-1)!} \frac{(p+1)^{i}}{({T}+p+1)^{i-1}} \leq \frac{\e}{p+1}~ i=2, \ldots, p\\
        \frac{(L_p + p\sigma )\aa{D}^{p+1}}{{(p+1)!}} \frac{(p+1)^{p + 1}}{({T}+p+1)^{p}} = \frac{\e}{p+1}.
    \end{gather*}
    Then, from \eqref{eq:convergence} we have:
    \begin{equation*}
        f(x_{T+1}) - f(x^{\ast}) \leq \e.
    \end{equation*}
\end{proof}

Thus, we showed that the number of steps sufficient for the  Inexact Tensor Method   to find an $\e$-solution to problem \eqref{eq:PrSt} is the same as for non-accelerated exact tensor method \cite{nesterov2019implementable} and is equal to
$
    O\left({\left(\dfrac{L_p \aa{D}^{p+1}}{\e}\right)^{1/p}}\right)
$ under the assumption that the inexactness $\delta_i$ of each derivative satisfies $\delta_i = O \ls\e^{\frac{p-i+1}{p}}\rs$, $i=1,\ldots,p$. Interestingly, as the order of the derivative increases, the requirement for the accuracy of the derivative becomes less strict.

\aa{
\begin{remark}
    In the statement of Theorem \ref{thm:convergence} we defined $D = \max\limits_{x\in L^\prime(x_0)} \|x-x_0\|$, where $\mathcal{L}^\prime(x_0) = \lb x\vert f(x) \leq f(x_0) + \frac{T\delta_1}{2\gamma}\rb.$
    Choosing the inexactness level $\delta_1$ and the number of iterations $T$ as in Corollary \ref{lm:inexactenss_nonacc}, we have  $\dfrac{T\delta_1}{2\gamma} \leq \dfrac{\e}{2(p+1)}.$
    Therefore, the Inexact Tensor Method is monotone up to a small inaccuracy $\frac{\e}{2(p+1)}$.
 \end{remark}
}

\section{Accelerated Inexact Tensor Method}\label{sec:acceleration}
In this section, we use the estimating sequences technique to propose
Accelerated Inexact Tensor Method and prove its convergence rate theorem providing faster rates than that of Algorithm \ref{alg:inexact}.

\begin{algorithm}
  \caption{Accelerated Inexact Tensor Method}\label{alg:inexact_acc}
  \begin{algorithmic}[1]
      \STATE \textbf{Input:} convex function $f$ such that $\nabla^p f$ is  $L_p$-Lipschitz; $x_0$ is starting point; constants \aa{$\sigma \geq \max\lb 1; \tfrac{3}{p}\rb L_p
      $}, \aa{$\eta_i\geq 3$ for $i = 2, \ldots, p$}; nonnegative nondecreasing sequences $\{\bk^t_i\}_{t \geq 0}$ for $i = 2, \ldots, p+1$, and
      \begin{equation}\label{eq:alphas}
          \alpha_t = \frac{p+1}{t + p + 1}, ~~~ A_t = \prod \limits_{j=1}^t(1 -\alpha_j), ~~~ A_0 = 1.
      \end{equation}
      \STATE \textbf{Precomputation:}
    Call the inexact oracle to compute ${G}_{x_0, i}$ for $i = 1, \ldots, p$ such that Condition $\ref{cnd:sampling}$ is satisfied. Compute 
    \begin{equation}\label{eq:x1_acc}
        x_1=\arg \min _{x \in \mathbb{R}^{n}} \{\phi_{x_0, p}(x)+ \sum \limits_{i = 2}^p \frac{\delta_i}{(i - 1)!}d_i(x - x_0) + \frac{\sigma}{(p-1)!}d_{p+1}(x - x_0)\}
    \end{equation}
    \begin{equation}\label{eq:psi_1}
        y_1= \arg \min _{x \in \mathbb{R}^{n}}\left\{\psi_{1}(x):=f\left(x_{1}\right)+ \sum \limits_{i = 2}^{p+1} \frac{\bk_i^{0}}{(i - 1)!}d_i(x - x_0) \right\}.
    \end{equation}
    \FOR{$t \geq 1$} 
        \STATE Set 
            \begin{equation}\label{eq:u_t}
                v_t = (1 - \alpha_t)x_t + \alpha_t y_t,
            \end{equation}
        \STATE Call the inexact oracle to compute ${G}_{v_t, i}$ for $i = 1, \ldots, p$ such that Condition $\ref{cnd:sampling}$
        is satisfied and set
            \begin{equation}\label{eq:acc_prox_x_step}
                x_{t+1}=\arg \min _{x \in \mathbb{R}^{n}} \{\phi_{v_t, p}(x) + \sum \limits_{i = 2}^p \frac{\eta_i\delta_i}{(i - 1)!}d_i(x - v_t) + \frac{\sigma}{(p-1)!}d_{p+1}(x - v_t)\}.
            \end{equation}
        \STATE Compute 
            \begin{equation}\label{eq:estimating_seq}
            \begin{aligned}
                y_{t+1}=\arg \min _{x \in \mathbb{R}^{n}}\left\{\psi_{t+1}(x):=\psi_{t}(x)+ \sum \limits_{i = 2}^{p+1} \frac{\bk^{t}_i - \bk^{t-1}_i}{(i-1)!} d_{i}(x - x_0)+\frac{\alpha_{t}}{A_{t}} 
                \phi_{x_{t+1}, 1}(x) \right\}.
                \end{aligned}
            \end{equation}
    \ENDFOR
  \end{algorithmic}
\end{algorithm}

In Algorithm \ref{alg:inexact_acc} we utilize the estimating sequence technique to obtain acceleration. In this technique, to prove the convergence theorem (Theorem \ref{thm:acc_convergence}) one needs to construct a lower and an upper bound for the estimating sequence $\psi_t(x)$ based on the objective function. So, the full proof is organized as follows:
\begin{itemize}
    \item Lemma \ref{lem:upper_seq} provides an upper bound for the estimating sequence  $\psi_t(x)$ \eqref{eq:psi_1}, \eqref{eq:estimating_seq}; 
    \item Lemma \ref{lem:step} provides a lower bound on $\psi_t(x)$ based on results of technical Lemmas
    \ref{lem:dual}- \ref{lm:argmin}; 
    \item finally, everything is combined together in Theorem \ref{thm:acc_convergence} in order to prove convergence and obtain convergence rate.
\end{itemize}

The following lemma shows that the sequence of functions $\psi_t(x)$ generated by Algorithm \ref{alg:inexact_acc} can be upper bounded by the properly regularized objective function.

\begin{lemma}\label{lem:upper_seq}
    If Condition $\ref{cnd:sampling}$ is satisfied and $\psi_t(x)$ is defined as in $\eqref{eq:psi_1}$, $\eqref{eq:estimating_seq}$ then, for any $x \in \mathbb{R}^n$, we have
    \begin{gather*}
        \psi_t(x) \leq \frac{f(x)}{A_{t - 1}} +\delta_1\|x_1-x_0\|+ \aa{\delta_1\|x-x_0\|} + \sum \limits_{i = 2}^p \frac{ 2\delta_i + \bk^{t - 1}_i}{(i - 1)!}d_i(x - x_0) \\+ \frac{L_p + p\sigma + \bk^{\aa{t-1}}_{p+1}}{p!}d_{p+1}(x - x_0)
        + \sum \limits_{i = 1}^{t - 1} \frac{\alpha_i}{A_i} \langle G_{x_{i+1}, 1} - \nabla f(x_{i+1}), x - x_{i+1}\rangle.
    \end{gather*}
\end{lemma}

\begin{proof}
    \begin{align*}
        f(x_1)  &\stackrel{\eqref{eq:func_bnd}}{\leq} \phi_{x_0, p}(x_1)+ \delta_1 \|x_1-x_0\| + \sum \limits_{i = 2}^p \frac{\delta_i}{(i - 1)!}d_i(x_1 - x_0) + \frac{L_p}{p!}d_{p+1}(x_1 - x_0)\\
         &\stackrel{L_p \leq \sigma}{\leq} \phi_{x_0, p}(x_1) + \delta_1 \|x_1-x_0\| + \sum \limits_{i = 2}^p \frac{\delta_i}{(i - 1)!}d_i(x_1 - x_0) + \frac{\sigma}{p!}d_{p+1}(x_1 - x_0)\\
        &\stackrel{\sigma \leq p\sigma}{\leq} \phi_{x_0, p}(x_1) + \delta_1 \|x_1-x_0\| + \sum \limits_{i = 2}^p \frac{\delta_i}{(i - 1)!}d_i(x_1 - x_0) + \frac{p\sigma}{p!}d_{p+1}(x_1 - x_0)\\
        &\stackrel{\eqref{eq:x1_acc}}{\leq} \phi_{x_0, p}(x) + \delta_1 \|x_1-x_0\|  + \sum \limits_{i = 2}^p \frac{\delta_i}{(i - 1)!}d_i(x - x_0) + \frac{p\sigma}{p!}d_{p+1}(x - x_0)\\
        &\stackrel{\eqref{eq:func_bnd}}{\leq}
        f(x) + \delta_1 \|x_1-x_0\| + \aa{\delta_1\|x-x_0\|}  + \sum \limits_{i = 2}^p \frac{2\delta_i}{(i - 1)!}d_i(x - x_0) + \frac{L_p + p\sigma}{p!}d_{p+1}(x - x_0).
    \end{align*}
    Therefore, 
    \begin{gather} 
        \psi_1(x) = f(x_1) + \sum \limits_{i = 2}^{p+1} \frac{\bk^0_i}{(i - 1)!}d_i(x - x_0) \label{eq:lem_upper_seq_pr1} \leq f(x) + \delta_1 \|x_1-x_0\| \\
        + \aa{\delta_1\|x-x_0\|}  +  \sum \limits_{i = 2}^p \frac{ 2\delta_i + \bk^0_i}{(i - 1)!}d_i(x - x_0) + \frac{L_p + p\sigma + p\bk^{0}_{p+1}}{p!}d_{p+1}(x - x_0). \notag
    \end{gather}
    From \eqref{eq:estimating_seq} we have
    \begin{gather}
        \psi_t(x) = \psi_1(x) + \sum \limits_{i = 2}^{p+1} \frac{\bk^{t - 1}_i - \bk^{0}_i}{(i - 1)!}d_i(x - x_0) + \sum \limits_{j = 1}^{t - 1} \frac{\alpha_j}{A_j} \phi_{x_{j+1}, 1}(x) \notag\\
        = \psi_1(x) + \sum \limits_{i = 2}^{p+1} \frac{\bk^{t - 1}_i - \bk^{0}_i}{(i - 1)!}d_i(x - x_0) + \sum \limits_{j = 1}^{t - 1} \frac{\alpha_j}{A_j} \Phi_{x_{j+1}, 1}(x)\notag \\
        + \sum \limits_{j = 1}^{t - 1} \frac{\alpha_j}{A_j} \la G_{x_{j+1}, 1} - \nabla f(x_{j+1}), x - x_{j+1}\ra.
        \label{eq:lem_upper_seq_pr2}
    \end{gather}
From \eqref{eq:alphas} we have that, for all $j \geq 1$, $A_j = A_{j-1}(1-\alpha_j)$, which leads to $\frac{\alpha_j}{A_j}=\frac{1}{A_j}-\frac{1}{A_{j-1}}$. Hence, we have $\sum \limits_{j = 1}^{t - 1} \frac{\alpha_j}{A_j} = \frac{1}{A_{t-1}} - \frac{1}{A_0} $ and, using the convexity of the objective $f$, we get
    \begin{gather}
        \sum \limits_{j = 1}^{t - 1} \frac{\alpha_j}{A_j} \Phi_{x_{j+1}, 1}(x) \leq f(x)\sum \limits_{j = 1}^{t - 1} \frac{\alpha_j}{A_j} = f(x)\left(\frac{1}{A_{t-1}} - \frac{1}{A_0}\right) . \label{eq:lem_upper_seq_pr3}
    \end{gather}
    Finally, combining all the inequalities from above and using that $A_0=1$, we obtain 
    \begin{gather}
        \psi_t(x)  \stackrel{\eqref{eq:lem_upper_seq_pr2},\eqref{eq:lem_upper_seq_pr3}}{\leq} \psi_1(x) + \sum \limits_{i = 2}^{p+1} \frac{\bk^{t - 1}_i - \bk^{0}_i}{(i - 1)!}d_i(x - x_0) + f(x)\left(\frac{1}{A_{t-1}} - \frac{1}{A_0}\right) \notag \\
        + \sum \limits_{j = 1}^{t - 1} \frac{\alpha_j}{A_j} \la G_{x_{j+1}, 1} - \nabla f(x_{j+1}), x - x_{j+1}\ra \notag \\
        \stackrel{\eqref{eq:lem_upper_seq_pr1},A_0=1}{\leq} \frac{f(x)}{A_{t - 1}} + \delta_1 \|x_1-x_0\|+ \aa{\delta_1\|x-x_0\|} + \sum \limits_{i = 2}^p \frac{2\delta_i + \bk^{t - 1}_i}{(i - 1)!}d_i(x - x_0)\notag \\ 
        + \frac{L_p + p\sigma + p\bk^{t}_{p+1}}{p!}d_{p+1}(x - x_0)
        + \sum \limits_{j = 1}^{t - 1} \frac{\alpha_j}{A_j} \la G_{x_{j+1}, 1} - \nabla f(x_{j+1}), x - x_{j+1}\ra.
    \end{gather}
\end{proof}

The next Lemma characterizes the progress of the tensor step  \eqref{eq:acc_prox_x_step} in Algorithm $\ref{alg:inexact_acc}$.

\begin{lemma}\label{lem:scalar_lb_cases}
    Let $\{x_t, v_t\}_{t \geq 1}$ be generated by Algorithm $\ref{alg:inexact_acc}$. Then, the following holds.
    \begin{itemize}
        \item If 
        \begin{equation}
            \label{delta_1_case}
            \delta_1 \geq \sum \limits_{i = 2}^p \frac{\delta_i}{(i-1)!}\|x_{t+1} - v_t\|^{i-1} + \frac{L_p}{p!}\|x_{t+1} - v_t\|^{p},
        \end{equation}
        then
        $$f(x_{t+1}) - f(x^{\ast}) \leq \ls\max\lb\frac{p\sigma}{L_p};\max_{i}\lb\eta_i\rb\rb + 2\rs\delta_1\|x_{t+1} - x^{\ast}\|;$$
        \item else
        \begin{equation}
        \begin{gathered}
        \label{eq:scalar_lb_cases}
        \langle \nabla f(x_{t+1}), v_t - x_{t+1} \rangle 
        \geq 
        \\ \frac{\sqrt{5}}{3} \min \left\{\min \limits_{i = 2, \ldots, p} \|\nabla f(x_{t+1})\|^\frac{i}{i-1}\left( \frac{(i-1)!}{p(\eta_i + 2)\delta_i }\right)^\frac{1}{i-1}, \right.
        \left.\|\nabla f(x_{t+1})\|^\frac{p+1}{p}
        \left( \frac{(p-1)!}{2L_p + p\sigma}\right)^\frac{1}{p}\right\}.
         \end{gathered}
    \end{equation}
    \end{itemize}
\end{lemma}
\begin{proof}
    For simplicity, we denote
    \begin{equation}
        \label{big_sum_lemma6}
        \zeta_{t+1} = \sum \limits_{i = 2}^p \frac{\eta_i\delta_i}{(i - 1)!}\|x_{t+1} - v_t\|^{i - 2} + \frac{\sigma}{(p-1)!}\|x_{t+1} - v_t\|^{p-1}.
    \end{equation}
    By the optimality condition in \eqref{eq:acc_prox_x_step} 
    
    \begin{equation}
   \label{eq:opt_cnd_acc}
    \begin{gathered}
    0 = \nabla \phi_{v_t, p}(x_{t+1})+\sum \limits_{i = 2}^p \frac{\eta_i\delta_i}{(i - 1)!}\nabla d_i(x_{t+1} - v_t)  + \frac{\sigma}{(p - 1)!}\nabla d_{p+1}(x_{t+1} - v_t) \\ \stackrel{\eqref{big_sum_lemma6}}{=} \nabla \phi_{v_t, p}(x_{t+1}) + \zeta_{t+1} (x_{t+1} - v_t).
    \end{gathered}
    \end{equation}
    
    From the optimality condition \eqref{eq:opt_cnd_acc} we obtain 
    \begin{equation}
    \label{grad_delta_1}
    \begin{gathered}
        \|\nabla f(x_{t+1})\| =  \left\|\nabla f(x_{t+1}) - \nabla \phi_{v_t, p}(x_{t+1}) -\zeta_{t+1} (x_{t+1} - v_t)\right\|\\
        \leq \|\nabla \phi_{v_t, p}(x_{t+1}) - \nabla f(x_{t+1})\| + \zeta_{t+1} \|x_{t+1} - v_t\| \\
        \stackrel{\eqref{eq:1deriv_bnd}}{\leq} \delta_1 + 
        \sum \limits_{i = 2}^p \frac{(\eta_i+1)\delta_i}{(i-1)!}\|x_{t+1} - v_t\|^{i - 1} + \frac{L_p + p\sigma}{p!}\|x_{t+1} - v_t\|^p.
        \end{gathered}
\end{equation}
    If $\delta_1$ is dominating, i.e. \eqref{delta_1_case} holds, then
    \begin{align*}
        \|\nabla f(x_{t+1})\| 
        &\leq \delta_1 + 
        \sum \limits_{i = 2}^p \frac{(\eta_i+1)\delta_i}{(i-1)!}\|x_{t+1} - v_t\|^{i - 1} + \frac{L_p + p\sigma}{p!}\|x_{t+1} - v_t\|^p\\
        &\stackrel{\eqref{delta_1_case}}{\leq} \ls\max\lb\frac{p\sigma}{L_p};\max_{\aa{i \in \lb 2,\ldots, p\rb}}\lb\eta_i\rb\rb + 2\rs\delta_1.
\end{align*}
    Since $f$ is convex, we have
    \begin{align*}
    f(x_{t+1}) - f(x^{\ast}) &\leq \langle \nabla f(x_{t+1}), x_{t+1} - x^{\ast} \rangle \leq \|\nabla f(x_{t+1})\|\|x_{t+1} - x^{\ast}\|    \\ 
    &\leq \ls\max\lb\frac{p\sigma}{L_p};\max_{\aa{i \in \lb 2,\ldots, p\rb}}\lb\eta_i\rb\rb + 2\rs\delta_1\|x_{t+1} - x^{\ast}\|.
    \end{align*}
    This proves the first statement. 
    
    Now, we move to the second statement and for the rest of the proof we assume that
    \begin{equation}
        \label{delta_1_not}
        \delta_1 \leq \sum \limits_{i = 2}^p \frac{\delta_i}{(i-1)!}\|x_{t+1} - v_t\|^{i-1} + \frac{L_p}{p!}\|x_{t+1} - v_t\|^{p}.
    \end{equation}
    We start with getting an upper bound for $\|\nabla \phi_{v_t, p}(x_{t+1}) - \nabla f(x_{t+1})\|$.
    Combining the above inequality with \eqref{eq:1deriv_bnd} we have 
    \begin{gather*}
        \|\nabla \phi_{v_t, p}(x_{t+1}) - \nabla f(x_{t+1})\| 
        \stackrel{\eqref{eq:1deriv_bnd}}{\leq} \delta_1+
         \sum \limits_{i = 2}^p \frac{\delta_i}{(i-1)!}\|x_{t+1} - v_t\|^{i-1} + \frac{L_p}{p!}\|x_{t+1} - v_t\|^{p}\\
         \stackrel{\eqref{delta_1_not}}{\leq} 
         \sum \limits_{i = 2}^p \frac{2\delta_i}{(i-1)!}\|x_{t+1} - v_t\|^{i-1} + \frac{2L_p}{p!}\|x_{t+1} - v_t\|^{p}\\
         \stackrel{\eta_i\geq 3}{\leq} 
         \frac{2}{3}\ls \sum \limits_{i = 2}^p \frac{\eta_i\delta_i}{(i-1)!}\|x_{t+1} - v_t\|^{i-1}\rs + \frac{2L_p}{p!}\|x_{t+1} - v_t\|^{p}\\
         \stackrel{3L_p\leq p\sigma}{\leq} 
         \frac{2}{3}\ls \sum \limits_{i = 2}^p \frac{\eta_i\delta_i}{(i-1)!}\|x_{t+1} - v_t\|^{i-1} + \frac{\sigma}{(p-1)!}\|x_{t+1} - v_t\|^{p}\rs\\
         \stackrel{\eqref{big_sum_lemma6}}{=} \frac{2}{3}\zeta_{t+1}\|x_{t+1} - v_t\|.
    \end{gather*}
   Next, from the previous inequality and optimality condition \eqref{eq:opt_cnd_acc}, we get
    \begin{gather*}
        \frac{4}{9}\zeta_{t+1}^2\|x_{t+1} - v_t\|^2 \geq \|\nabla \phi_{v_t, p}(x_{t+1}) - \nabla f(x_{t+1})\|^2 
        \stackrel{\eqref{eq:opt_cnd_acc}}{=} \left\|\nabla  f(x_{t+1}) +  \zeta_{t+1} (x_{t+1} - v_t) \right\|^2 \\
        = 2 \langle \nabla f(x_{t+1}), x_{t+1} - v_t \rangle \zeta_{t+1}
        + \zeta_{t+1}^2\|x_{t+1} - v_t\|^2 + \|\nabla f(x_{t+1}) \|^2 .
    \end{gather*}
    Hence, 
    \begin{gather*}
        2 \langle \nabla f(x_{t+1}), v_t - x_{t+1} \rangle \zeta_{t+1} 
        \geq
        \|\nabla f(x_{t+1}) \|^2 + \frac{5}{9}\zeta_{t+1}^2\|x_{t+1} - v_t\|^2\\
        \geq \frac{2\sqrt{5}}{3}\zeta_{t+1}\|\nabla f(x_{t+1}) \| \|x_{t+1} - v_t\|.
    \end{gather*}
    Dividing both sides by $2\zeta_{t+1}$, we finally get
    \begin{equation}\label{eq:scalar_lb}
        \la\nabla f(x_{t+1}), v_t - x_{t+1} \ra 
        \geq
        \frac{\sqrt{5}}{3}\|\nabla f(x_{t+1}) \| \|x_{t+1} - v_t\|.
    \end{equation}
To bound $\|x_{t+1}-v_t\|$ from below using the gradient norm $\|\nabla f(x_{t+1})\|$, we use inequality \eqref{grad_delta_1}:
\begin{gather*}
     \|\nabla f(x_{t+1})\| \leq \delta_1 + 
        \sum \limits_{i = 2}^p \frac{(\eta_i+1)\delta_i}{(i-1)!}\|x_{t+1} - v_t\|^{i - 1} + \frac{L_p + p\sigma}{p!}\|x_{t+1} - v_t\|^p\\
        \stackrel{\eqref{delta_1_not}}{\leq}
        \sum \limits_{i = 2}^p \frac{(\eta_i+2)\delta_i}{(i-1)!}\|x_{t+1} - v_t\|^{i - 1} + \frac{2L_p + p\sigma}{p!}\|x_{t+1} - v_t\|^p.
\end{gather*}

Next, we consider $p-1$ cases depending on which term dominates in the RHS of the last inequality. 

\begin{itemize}
    \item If, for some $i = 2, \ldots, p$, the term
    $\frac{(\eta_i + 2)\delta_i}{(i-1)!}\|x_{t+1} - v_t\|^{i-1}$ dominates the others, then we get the following bound
    $$\|x_{t+1} - v_t\| \geq \left( \frac{\|\nabla f(x_{t+1})\| (i-1)!}{p(\eta_i +2)\delta_i} \right)^{\frac{1}{i-1}}.$$
    
    \item If the term $\frac{2L_p + p\sigma}{p!}\|x_{t+1} - v_t\|^{p}$ dominates the others, then
    $$\|x_{t+1} - v_t\| \geq \left(\frac{\|\nabla f(x_{t+1})\| (p-1)!}{2L_p + p\sigma} \right)^{\frac{1}{p}}.$$
\end{itemize} 
Clearly, $\|x_{t+1} - v_t\|$ is bounded from below by the minimum of these $p$ lower bounds. Combining this minimum bound with the bound \eqref{eq:scalar_lb} we finish the proof.
\end{proof}

We will also use the next technical lemma \cite{nesterov2008accelerating,ghadimi2017secondorder} on Fenchel conjugate for the $p$-th power of the norm.
\begin{lemma}\label{lem:dual}
    Let $g(z)=\frac{\theta}{p}\|z\|^{p}$ for $p \geq 2$ and $g^{*}$ be its conjugate function i.e., $g^{*}(v)=\sup _{z}\{\langle v, z\rangle-$ $g(z)\} .$ Then, we have
$$
g^{*}(v)=\frac{p-1}{p}\left(\frac{\|v\|^{p}}{\theta}\right)^{\frac{1}{p-1}}
$$
Moreover, for any $v, z \in \mathbb{R}^{n}$, we have $g(z)+g^{*}(v)-\langle z, v\rangle \geq 0 .$
\end{lemma}

The next step is to provide a lower bound $\psi_t(x) \geq \psi_t^* := \min_x \psi_t(x) \geq \frac{f(x_t)}{A_{t-1}} \aa{+ err_t}$ for all $x$, where $err_t$ is some error term that will be defined later. The convergence rate of Algorithm \ref{alg:inexact_acc} will follow from this bound and Lemma \ref{lem:upper_seq}. The proof of the desired lower bound is quite technical and requires several auxiliary lemmas. After that we combine all the technical results together to obtain convergence rate in the proof of Theorem \ref{thm:acc_convergence}. We start the technical derivations with the following result.

\begin{lemma}\label{lm:argmin}
    Let $h(x)$ be a convex function, $x_0 \in \mathbb{R}^n$, $\theta_i \geq 0$ for $i = 2,\ldots, p + 1$ and 
    $$\bar{x} = \arg \min \limits_{x \in \mathbb{R}^n} \{\bar{h}(x) = h(x) + \sum \limits_{i = 2}^{p + 1}  \theta_i d_i(x -x_0)\}.$$ Then, for all $x\in \mathbb{R}^n$,
    
    $$\bar{h}(x) \geq \bar{h}(\bar{x}) + \sum  \limits_{i = 2}^{p + 1} \left(\frac{1}{2}\right)^{i - 2} \theta_i d_i(x - \bar{x}).$$
\end{lemma}
\begin{proof}
    In \cite{nesterov2008accelerating} it is shown that, for all $x, y \in \mathbb{R}^n$ and for any $i \geq 2$, 
    \begin{equation*}
        d_i(x) - d_i(y) - \langle \nabla d_i(y), x - y \rangle \geq \left(\frac 12 \right)^{i-2}d_i(x - y).
    \end{equation*}
    Using the convexity of $h$, we have
    \begin{gather*}
        \bar{h}(x) = h(x) + \sum \limits_{i = 2}^{p + 1} \theta_i d_i(x - x_0) \geq h(\bar{x}) + \langle\nabla h(\bar{x}), x - \bar{x}\rangle + \sum \limits_{i = 2}^{p+1} \theta_i d_i(x - x_0) \\
        \geq h(\bar{x}) + \langle \nabla h(\bar x), x - \bar x\rangle + \sum \limits_{i = 2}^{p + 1} \theta_i\left( d_i(\bar{x} - x_0) + \langle\nabla d_i(\bar{x} - x_0), x - \bar{x} \rangle + \left(\frac{1}{2}\right)^{i - 2}d_i(x - \bar x)\right)\\
        = \bar h(\bar x) + \langle \nabla \bar h (\bar x), x - \bar x\rangle + \sum \limits_{i = 2}^{p + 1} \left(\frac{1}{2}\right)^{i - 2}\theta_i d_i(x - \bar x) \geq  \bar h(\bar x) +  \sum \limits_{i = 2}^{p + 1} \left(\frac{1}{2}\right)^{i - 2}\theta_i d_i(x - \bar x),
    \end{gather*}
    where the last inequality holds by optimality condition since $\bar h(x)$ is convex.
\end{proof}

Finally, the last technical step is the next Lemma which will be a part of the induction step to prove that $\frac{f(x_t)}{A_{t-1}} + err_t \leq  \min \limits_x \psi_t(x) = \psi^{\ast}_t$.

\begin{lemma}\label{lem:step}
    Let $\{x_t, \aa{y_t}\}_{t \geq 1}$ be generated by Algorithm $\ref{alg:inexact_acc}$ and define 
    \begin{equation}\label{eq:err_seq}
        err_t := \sum \limits_{j=1}^{t-1}\frac{\alpha_j}{A_j}\langle G_{x_{j+1}, 1} - \nabla f(x_{j+1}), y_{j+1} - x_{j+1}\rangle  .
    \end{equation}
    Assume also that
    \begin{equation}\label{eq:lemma_ass}
    \psi^{\ast}_t := \min \limits_x \psi_t(x)   \geq \frac{f(x_t)}{A_{t-1}} + err_t.
    \end{equation}
    Then,
    \begin{gather*}
        \psi_{t+1}^{\ast} \geq \frac{f(x_{t+1})}{A_t} + \frac{1}{A_t}\langle \nabla f(x_{t+1}), v_t - x_{t+1}\rangle + \sum \limits_{i = 2}^{p+1} \left(\frac{1}{2} \right)^{i - 2} \frac{ \bk_i^{t}}{(i - 1)!}d_i(y_{t+1} - y_t) \\
        + \frac{\alpha_t}{A_t}\langle \nabla f(x_{t+1}), y_{t+1} - y_t \rangle 
        + \dk{err_{t+1}} .
    \end{gather*}
\end{lemma}

\begin{proof}

    By definition,
    \begin{gather*}
        \psi_t(x) = f(x_1) + \sum \limits_{i=2}^{p+1} \frac{ \bk^{t-1}_i}{(i-1)!}d_i(x - x_0) + \sum \limits_{j = 1}^{t - 1} \frac{\alpha_j}{A_j}\phi_{x_{j+1}, 1}(x).
    \end{gather*}
    Next, we apply Lemma \ref{lm:argmin} with the following choice of parameters: \\$h(x) = f(x_1) +  \sum \limits_{j = 1}^{t - 1}\frac{\alpha_j}{A_j}\phi_{x_{j+1}, 1}(x)$, $\theta_i = \frac{ \bk^{t-1}_i}{(i-1)!}$ for $i = 2, \ldots, p+1$. \\
    By \eqref{eq:estimating_seq}, $y_t = \argmin \limits_{x\in \mathbb{R}^n} \bar{h}(x)$, and we have
    \begin{gather*}
        \psi_t(x) \geq \psi_t^{\ast} + \sum \limits_{i=2}^{p+1} \left(\frac{1}{2}\right)^{i-2}\frac{ \bk^{t-1}_i}{(i-1)!}d_i(x - y_t) 
        \stackrel{\eqref{eq:lemma_ass}}{\geq} \frac{f(x_t)}{A_{t - 1}} + \sum \limits_{i=2}^{p+1} \left(\frac{1}{2}\right)^{i-2}\frac{ \bk^{t-1}_i}{(i-1)!}d_i(x - y_t)
        +err_t,
    \end{gather*}
    where the last inequality follows from the assumption of the lemma.
    
    By the definition of $\psi_{t+1}(x)$, the above inequality, and convexity of $f$, we obtain
    \begin{gather*}
        \psi_{t+1}(x) =  \psi_t(x) + \sum \limits_{i = 2}^{p+1} \frac{\bk_i^ 
        t - \bk_i^{t-1}}{(i - 1)!}d_i(x - x_0) + \frac{\alpha_t}{A_t}\phi_{x_{t+1}, 1}(x)\\
        \geq \frac{f(x_t)}{A_{t - 1}} + \sum \limits_{i=2}^{p+1} \left(\frac{1}{2}\right)^{i-2}\frac{ \bk^{t}_i}{(i-1)!}d_i(x - y_t)  + \frac{\alpha_t}{A_t}\phi_{x_{t+1}, 1}(x)  + err_t\\
        \geq  \frac{1}{A_{t-1}}(f(x_{t+1}) + \langle \nabla f(x_{t+1}), x_t - x_{t+1}\rangle) +\sum \limits_{i=2}^{p+1} \left(\frac{1}{2}\right)^{i-2}\frac{ \bk^{t}_i}{(i-1)!}d_i(x - y_t)  \\
         + \frac{\alpha_t}{A_t}\Phi_{x_{t+1}, 1}(x)
        + \frac{\alpha_t}{A_t} \langle G_{x_{t+1}, 1} - \nabla f(x_{t+1}), x - x_{t+1} \rangle + err_t  
    \end{gather*}
    Next, we consider the sum of two linear models from the last inequality:
    \begin{gather*}
    \frac{1}{A_{t-1}}(f(x_{t+1}) + \langle \nabla f(x_{t+1}), x_t - x_{t+1}\rangle) + \frac{\alpha_t}{A_t}\Phi_{x_{t+1}, 1}(x) \\
    =\frac{1}{A_{t-1}}(f(x_{t+1}) + \langle \nabla f(x_{t+1}), x_t - x_{t+1}\rangle)\\
    + \frac{\alpha_t}{A_t} (f(x_{t+1}) 
    + \langle \nabla f(x_{t+1}), x - x_{t+1}\rangle)\\
        \stackrel{\eqref{eq:alphas}}{=} \frac{1 - \alpha_t}{A_t}f(x_{t+1}) + \frac{1 - \alpha_t}{A_t}\langle \nabla f(x_{t+1}), x_t - x_{t+1}\rangle + \frac{\alpha_t}{A_t}f(x_{t+1}) +\frac{\alpha_t}{A_t}\langle \nabla f(x_{t+1}), x- x_{t+1}\rangle\\
        \\
        \stackrel{\eqref{eq:u_t}}{=} \frac{f(x_{t+1})}{A_{t}} + \frac{1 - \alpha_t}{A_t}\langle \nabla f(x_{t+1}), \frac{v_t - \alpha_t y_t}{1 - \alpha_t}- x_{t+1}\rangle + \frac{\alpha_t}{A_t}\langle \nabla f(x_{t+1}), x- x_{t+1} \rangle \\
        = \frac{f(x_{t+1})}{A_{t}} + \frac{1}{A_t}\langle \nabla f(x_{t+1}), v_t- x_{t+1}\rangle + \frac{\alpha_t}{A_t}\langle \nabla f(x_{t+1}), x- y_t \rangle.
    \end{gather*}
    Therefore, 
    \begin{equation*}
        \begin{gathered}
            \psi_{t+1}(x) \geq \frac{f(x_{t+1})}{A_{t}} + \frac{1}{A_t}\langle \nabla f(x_{t+1}), v_t- x_{t+1}\rangle + \sum \limits_{i=2}^{p+1} \left(\frac{1}{2}\right)^{i-2}\frac{ \bk^{t}_i}{(i-1)!}d_i(x - y_t)\\
            + \frac{\alpha_t}{A_t}\langle \nabla f(x_{t+1}), x- y_t  \rangle + \frac{\alpha_t}{A_t}
           \langle G_{x_{i+1}, 1} - \nabla f(x_{i+1}), x - x_{i+1} \rangle + err_t.
        \end{gathered}
    \end{equation*}
    Finally, by \eqref{eq:estimating_seq}, we get
    \begin{gather*}
            \psi^{\ast}_{t+1} = \psi_{t+1}(y_{t+1})  \geq \frac{f(x_{t+1})}{A_{t}} + \frac{1}{A_t}\langle \nabla f(x_{t+1}), v_t- x_{t+1}\rangle 
            + \sum \limits_{i=2}^{p+1} \left(\frac{1}{2}\right)^{i-2}\frac{ \bk^{t}_i}{(i-1)!}d_i(y_{t+1} - y_t) \notag\\
            + \frac{\alpha_t}{A_t}\langle \nabla f(x_{t+1}), y_{t+1}- y_t  \rangle +\dk{err_{t+1}}.
    \end{gather*}
\end{proof}

Finally, we are in a position to prove the convergence rate theorem for the Accelerated Inexact Tensor Method (Algorithm \ref{alg:inexact_acc}). The proof uses the following technical assumption.
\begin{assumption}\label{eq:bound}
    Let $\{x_t, y_t\}_{t\geq 1}$ be generated from Algorithm \ref{alg:inexact_acc}. Then there exists $\bar{R} > 0$ such that
    \begin{equation}\label{eq:assumption_bound_x}
        \begin{aligned}
        &\aa{\|x_1 - x_0\| \leq \bar{R}, \quad \|x_t - x^{\ast}\| \leq \bar{R}} \quad \forall t \geq 1,\\  
        &\|y_t - x^{\ast}\| \leq \bar{R} \quad \forall t \geq 1.
        \end{aligned}
    \end{equation}
\end{assumption}
Note, that in the case of exact gradient, i.e. $\delta_1=0$, we do not need Assumption \ref{eq:bound}. Let also $R$ be such that
\begin{equation}
\label{R_teorem3}
\|x_0 - x^{\ast}\| \leq R.
\end{equation}

\begin{theorem}\label{thm:acc_convergence}
         Let Assumption \ref{eq:bound} hold, Condition \ref{cnd:sampling} be satisfied, $f(x)$ be convex and $p$ times differentiable function with Lipschitz constant $L_p$ for the $p$-th derivative, \aa{$\sigma \geq \max\lb 1; \tfrac{3}{p}\rb L_p
      $}, and $\eta_i\geq 3$, $i=1,\ldots,p$. 
    Assume also that   Algorithm  \ref{alg:inexact_acc} makes $T\geq 1$ iterations with parameters
    \begin{gather*}
        \bk_i^{t} = \frac{p}{2} \left( \frac{(i-1)6}{i\sqrt{5}}\right)^{i-1}(\eta_i + 2)\frac{\alpha_t^i}{A_t}\delta_i, i=2,...,p, \quad  \bk_{p+1}^{t} = \frac{1}{2}\left( \frac{6p }{\sqrt{5}(p+1)}\right)^{p} \frac{\alpha_t^{p+1}}{A_t}(2L_p+p\sigma).
    \end{gather*}
    
    If, for for all iterations $t \leq T$, we have 
    $$\delta_1 \leq \sum \limits_{i = 2}^p \frac{\delta_i}{(i-1)!}\|x_{t+1} - v_t\|^{i-1} + \frac{L_p}{p!}\|x_{t+1} - v_t\|^{p},$$
    then we have the following bound for the objective residual: 
    \begin{equation}
    \label{eq:acc_convergence}
    \begin{aligned}
       f(x_{T}) - f(x^{\ast}) 
       &\leq \aa{2}\delta_1 \aa{\bar{R}} +
       \sum \limits_{i = 2}^p \frac{\frac{p}{2} \left( \frac{6\aa{(i - 1)}}{\sqrt{5}\aa{i}}\right)^{i-1}(p+1)^{i}(\eta_i + 2)}{i!}\frac{\delta_i R^{i+1}}{(T+p+1)^{i}} \\ 
      &+ \frac{\left( \frac{6p }{\sqrt{5}}\right)^{p}}{p!} \frac{(\aa{2}L_p+p\sigma)R^{p+1}}{(T+p+1)^{p+1}},
     \end{aligned}
    \end{equation}
    or, in other words, 
    \begin{equation}
    \label{eq:acc_convergence_O}
    \begin{aligned}
       f(x_{T}) - f(x^{\ast}) 
       &\leq O(\delta_1 \aa{\bar{R}}) +
       \sum \limits_{i = 2}^p O\ls\frac{\delta_i R^{i+1}}{T^{i}}\rs + O\ls\frac{L_p R^{p+1}}{T^{p+1}}\rs.
     \end{aligned}
    \end{equation}
    Otherwise, if for some iteration $t \leq T$,
     $$\delta_1 \geq \sum \limits_{i = 2}^p \frac{\delta_i}{(i-1)!}\|x_{t+1} - v_t\|^{i-1} + \frac{L_p}{p!}\|x_{t+1} - v_t\|^{p},$$
     then
     \begin{equation}\label{eq:thm_acc_second_case}
         f(x_t) - f(x^{\ast}) \leq \ls\max\lb\frac{p\sigma}{L_p};\max_{\aa{i \in \lb 2,\ldots, p\rb}}\lb\eta_i\rb\rb + 2\rs\delta_1 \aa{\bar{R}}= O(\delta_1 \aa{\bar{R}}).
     \end{equation}

\end{theorem}
\begin{proof}
    \aa{Firstly, assuming that for all iterations $t \leq T$ it holds that 
    $$\delta_1 \leq \sum \limits_{i = 2}^p \frac{\delta_i}{(i-1)!}\|x_{t+1} - v_t\|^{i-1} + \frac{L_p}{p!}\|x_{t+1} - v_t\|^{p},$$}    
    we show by induction that, for all $t \geq 0$, 
    $$\frac{f(x_t)}{A_{t-1}}+ err_t \leq \psi^{\ast}_t,$$ where $err_t$ is defined in \eqref{eq:err_seq}.
    From \eqref{eq:psi_1}, since $A_0=1$, we have that $\frac{f(x_1)}{A_0} \leq \psi_1^{\ast}$. Let us assume that $$\frac{f(x_t)}{A_{t-1}} + err_t\leq \psi^{\ast}_t$$
    and show that  $\frac{f(x_{t+1})}{A_{t}} + err_{t+1}\leq \psi^{\ast}_{t+1}$. By Lemma \ref{lem:step}, we have 
    \begin{equation}\label{eq:show_pos}
        \begin{gathered}
            \psi_{t+1}^{\ast} \geq \frac{f(x_{t+1})}{A_t} + \frac{1}{A_t}\langle \nabla f(x_{t+1}), v_t - x_{t+1}\rangle 
            + \sum \limits_{i = 2}^{p+1} \left(\frac{1}{2} \right)^{i - 2} \frac{ \bk_i^{t}}{(i - 1)!}d_i(y_{t+1} - y_t) \\  + \frac{\alpha_t}{A_t}\langle \nabla f(x_{t+1}), y_{t+1} - y_t\rangle + \sum \limits_{j=1}^{t} \frac{\alpha_j}{A_j} \langle G_{x_{j+1}, 1} - \nabla f(x_{j+1}), y_{j+1} - x_{j+1} \rangle.   
        \end{gathered}
    \end{equation}
    Therefore, to complete the induction step we need to show, that the sum of all terms in the RHS except $\frac{f(x_{t+1})}{A_t}$ and error terms is non-negative. 
    
    Lemma \ref{lem:scalar_lb_cases} provides the lower bound for $\langle \nabla f(x_{t+1}), v_t - x_{t+1}\rangle$. Let us consider the case when the minimum in the RHS of \eqref{eq:scalar_lb_cases} is attained at the first term with particular $i = 2, \ldots, p$. 
    By Lemma \ref{lem:dual} with the following choice of the parameters
    $$z = y_t - y_{t+1}, ~~ v = \frac{\alpha_t}{A_t}\nabla f(x_{t+1}), ~~ \theta = \left(\frac{1}{2}\right)^{i-2}\frac{\bk_i^{t}}{(i-1)!},$$
    we have 
    \begin{equation}\label{eq:1case}
        \frac{1}{i}\left( \frac{1}{2} \right)^{i - 2}\frac{\bk_i^{t}}{(i-1)!}\|y_t -y_{t+1}\|^i + \frac{\alpha_t}{A_t}\langle \nabla f(x_{t+1}), y_{t+1} - y_t \rangle \geq - \frac{i - 1}{i}\left( \frac{\|\frac{\alpha_t}{A_t}\nabla f(x_{t+1})\|^i}{\left(\frac{1}{2}\right)^{i-2} \frac{\bk_i^{{t}}}{(i-1)!}} \right)^\frac{1}{i - 1}.
    \end{equation}
    Hence,
    \begin{gather*}
        \frac{f(x_{t+1})}{A_t} + \frac{1}{A_t}\langle \nabla f(x_{t+1}), v_t - x_{t+1}\rangle + \left(\frac{1}{{2}} \right)^{i - 2} \frac{ \bk_i^{{t}}}{(i - 1)!}d_i(y_{t+1} - y_t) + \frac{\alpha_t}{A_t}\langle \nabla f(x_{t+1}), y_{t+1} - {y_t}\rangle \\
        \stackrel{\eqref{eq:1case}}{\geq} \frac{f(x_{t+1})}{A_t} + \frac{1}{A_t}\langle \nabla f(x_{t+1}), v_t - x_{t+1}\rangle - \frac{i - 1}{i}\left( \frac{\|\frac{\alpha_t}{A_t}\nabla f(x_{t+1})\|^i}{\left(\frac{1}{{2}}\right)^{i-{2}} \frac{\bk_i^{{t}}}{{(i-1)}!}} \right)^\frac{1}{i - 1} \\
        \stackrel{\eqref{eq:scalar_lb_cases}}{\geq}
        \frac{f(x_{t+1})}{A_t} + \frac{1}{A_t}\frac{\sqrt{5}}{3}\|\nabla f(x_{t+1})\|^\frac{i}{i-1}\left( \frac{(i-1)!}{p(\eta_i + 2)\delta_i }\right)^\frac{1}{i-1} - \frac{i - 1}{i}\left( \frac{\|\frac{\alpha_t}{A_t}\nabla f(x_{t+1})\|^i}{\left(\frac{1}{{2}}\right)^{i-{2}} \frac{\bk_i^{{t}}}{{(i-1)}!}} \right)^\frac{1}{i - 1} \\
         \geq \frac{f(x_{t+1})}{A_t}, 
    \end{gather*}
    where the last inequality holds by our choice of the parameters
    \begin{equation}\label{eq:bar_kappa}
        \bk_i^{{t}} \geq \frac{p}{2} \left( \frac{(i-1)6}{i\sqrt{5}}\right)^{i-1}(\eta_i + 2)\frac{\alpha_t^i}{A_t}\delta_i,\quad \aa{i = 2, \ldots, p}.
    \end{equation}

    
    
    Next, we consider the case when the minimum in the RHS of \eqref{eq:scalar_lb_cases} is achieved on the second term. Again, by Lemma \ref{lem:dual} with the same choice of $z, v$ and with $\theta = {\left(\frac{1}{2}\right)^{p-1}\frac{\bk_{p+1}^{t}}{(p-1)!}}$, we have
    \begin{equation}
        \label{eq:2case}
        {\frac{1}{p}}\left( \frac{1}{{2}} \right)^{p - 1}\frac{\bk_{p+1}^{t}}{{\aa{p}!}}\|y_t -y_{t+1}\|^\aa{p+1} + \frac{\alpha_t}{A_t}\langle \nabla f(x_{t+1}), y_{t+1} - {y_t} \rangle \geq - \frac{p}{p+1}\left( \frac{\|\frac{\alpha_t}{A_t}\nabla f(x_{t+1})\|^{p+1}}{{\left(\frac{1}{2}\right)^{p-1} \frac{\bk_{p+1}^{t}}{(p-1)!}}} \right)^\frac{1}{p}.
    \end{equation}

   Hence, we get
    \begin{gather*}
        \frac{f(x_{t+1})}{A_t} + \frac{1}{A_t}\langle \nabla f(x_{t+1}), v_t - x_{t+1}\rangle + \frac{1}{2^{p-1}}\frac{\bk_{p+1}^{t}}{\aa{p}!}d_{p+1}(y_{t+1} - y_t) + \frac{\alpha_t}{A_t}\langle \nabla f(x_{t+1}), y_{t+1} - {y_t}\rangle\\
        \stackrel{\eqref{eq:2case}}{\geq} \frac{f(x_{t+1})}{A_t} + \frac{1}{A_t}\langle \nabla f(x_{t+1}), v_t - x_{t+1}\rangle  - \frac{p}{p+1}\left( \frac{\|\frac{\alpha_t}{A_t}\nabla f(x_{t+1})\|^{p+1}}{{\left(\frac{1}{2}\right)^{p-1} \frac{\bk_{p+1}^{t}}{\aa{p}!}}} \right)^\frac{1}{p}\\
        \stackrel{\eqref{eq:scalar_lb_cases}}{\geq}
        \frac{f(x_{t+1})}{A_t} + \frac{1}{A_t}\frac{\sqrt{5}}{3}\|\nabla f(x_{t+1})\|^\frac{p+1}{p}
        \left( \frac{(p-1)!}{(2L_p + p\sigma)}\right)^\frac{1}{p} -  \frac{p}{p+1}\left( \frac{\|\frac{\alpha_t}{A_t}\nabla f(x_{t+1})\|^{p+1}}{{\left(\frac{1}{2}\right)^{p-1} \frac{\bk_{p+1}^{t}}{\aa{p}!}}} \right)^\frac{1}{p}\\
         \geq \frac{f(x_{t+1})}{A_t},
    \end{gather*}
    where the last inequality holds by our choice of $\bk_{p+1}^{t}$:
    \begin{equation}\label{eq:bar_kappa_p+1}
        \bk_{p+1}^{t} \geq \frac{\aa{p}}{2}\left( \frac{6p }{\sqrt{5}(p+1)}\right)^{p} \frac{\alpha_t^{p+1}}{A_t}(2L_p+p\sigma).
    \end{equation}
    
    To sum up, by our choice of the parameters $\bk_{i}^{t}$, $i=2,...,p$, we obtain from  \eqref{eq:show_pos} that
    $$
        \psi_{t+1}^{\ast} \geq \frac{f(x_{t+1})}{A_t} + err_{t+1}.
    $$
    
    Thus, by induction, we obtain that, for all $t \geq 1$,
    \begin{gather}
       \frac{f(x_{t+1})}{A_t} + \sum \limits_{j=1}^{t}\frac{\alpha_j}{A_j}\langle G_{x_{j+1}, 1} - \nabla f(x_{j+1}), y_{j+1} - x_{j+1}\rangle \leq \psi_{t+1}^{\ast} \leq \psi_{t+1}(x^{\ast}) \stackrel{\text{Lemma } \ref{lem:upper_seq}}{\leq} \notag\\
       \frac{f(x^{\ast})}{A_{t}} + \delta_1\|x_1-x_0\| + \aa{\delta_1\|x^*-x_0\|}+ \sum \limits_{i = 2}^p \frac{ 2\delta_i + \bk^{t}_i}{(i - 1)!}d_i(x^{\ast} - x_0) + \frac{L_p + p\sigma +\bk_{p+1}^{t}}{p!}d_{p+1}(x^{\ast} - x_0) \notag\\
       + \sum \limits_{j = 1}^{t} \frac{\alpha_j}{A_j} \langle G_{x_{j+1}, 1} - \nabla f(x_{j+1}), x^{\ast} - x_{j+1}\rangle,
    \end{gather}
    where the first inequality has been just proved by induction and the third one comes from Lemma  \ref{lem:upper_seq}.
    
    We now bound the gradient error terms in this inequality:
    \begin{gather*}
        \sum \limits_{j = 1}^{t} \frac{\alpha_j}{A_j} \langle G_{x_{j+1}, 1} - \nabla f(x_{j+1}), x^{\ast} - x_{j+1}\rangle - \sum \limits_{j=1}^{t}\frac{\alpha_j}{A_j}\langle G_{x_{j+1}, 1} - \nabla f(x_{j+1}), y_{j+1} - x_{j+1}\rangle \\
        = \sum \limits_{j = 1}^{t } \frac{\alpha_j}{A_j} \langle G_{x_{j+1}, 1} - \nabla f(x_{j+1}), x^{\ast} - y_{j+1}\rangle 
        \leq \delta_1 \sum \limits_{j = 1}^{t} \frac{\alpha_j}{A_j} \|x^{\ast} - y_{j+1}\| \stackrel{\eqref{eq:assumption_bound_x}}{\leq}  \delta_1 \bar{R}\sum \limits_{j=1}^{t}\frac{\alpha_j}{A_j}.
    \end{gather*}

    Finally, we obtain the following convergence rate bound
    \begin{gather*}
       f(x_{T}) - f(x^{\ast})  \stackrel{\eqref{R_teorem3}, \eqref{eq:assumption_bound_x}}{\leq} A_T\left( \aa{2\delta_1\bar{R}}+\sum \limits_{i = 2}^p \frac{ 2\delta_i + \bk^{T}_i}{i!}R^{i} + \frac{ L_p + p\sigma + \bk_{p+1}^{T}}{(p+1)!}R^{p+1}
       + \delta_1  \bar{R}  \sum \limits_{j=1}^{T}\frac{\alpha_j}{A_j}\right) \\
      =  A_T\left(2\delta_1\bar{R} +  
      \sum \limits_{i = 2}^p \frac{ 2\delta_i + \frac{p}{2} \left( \frac{(i-1)6}{i\sqrt{5}}\right)^{i-1}(\eta_i + 2)\frac{\alpha_T^i}{A_T}\delta_i}{i!}R^{i}\right.\\ 
      \left.+ \frac{ L_p + p\sigma + \frac{1}{2}\left( \frac{6p }{\sqrt{5}(p+1)}\right)^{p} \frac{\alpha_T^{p+1}}{A_T}(2L_p+p\sigma)}{(p+1)!}R^{p+1}\right)
       + \delta_1  \bar{R}  A_T \sum \limits_{j=1}^{T}\frac{\alpha_j}{A_j} \\
    \stackrel{\eqref{alpha_t}, \eqref{eq:A_t_bound},\eqref{eq:ATat/At}}{\leq}
     \sum \limits_{i = 2}^p \frac{\aa{p} \left( \frac{6\aa{(i-1)}}{\sqrt{5}\aa{i}}\right)^{i-1}(p+1)^{i}(\eta_i + 2)}{i!}\frac{\delta_i R^{i}}{(T+p+1)^{i}} 
    + \frac{\left( \frac{6p }{\sqrt{5}}\right)^{p}}{\aa{(p-1)}!} \frac{(\aa{2}L_p+p\sigma)R^{p+1}}{(T+p+1)^{p+1}}
      + \aa{2}\delta_1 \bar{R}.
    \end{gather*}
    
     Now, for the rest of the proof we assume that for some iteration $t \leq T$ we have:
     $$\delta_1 \geq \sum \limits_{i = 2}^p \frac{\delta_i}{(i-1)!}\|x_{t+1} - v_t\|^{i-1} + \frac{L_p}{p!}\|x_{t+1} - v_t\|^{p}.$$
    Then, by Lemma \ref{lem:scalar_lb_cases} we have
    $$f(x_{t+1}) - f(x^{\ast}) \leq \ls\max\lb\frac{p\sigma}{L_p};\max_{i}\lb\eta_i\rb\rb + 2\rs\delta_1\|x_{t+1} - x^{\ast}\|.$$
    Thus, we obtain the second statement of the theorem.
\end{proof}

In Theorem \ref{thm:acc_convergence} has two cases. The first one provides an upper bound on the objective residual after $T$ iterations \eqref{eq:acc_convergence}. The second one shows, that there can be the case, when on iteration $t$ we have the objective residual smaller than $\delta_1\bar{R}$ already \eqref{eq:thm_acc_second_case}. Note, that this condition may be veryfied on every step of Algorithm \ref{alg:inexact_acc}.

The last term in the RHS of \ref{eq:acc_convergence} and \ref{eq:acc_convergence_O} corresponds to the case of an  Accelerated Tensor Method, i.e. $\delta_i = 0$, $i=1,\ldots,p$ provides similar convergence rate as \cite{nesterov2019implementable}. As in Inexact Tensor Method other terms correspond to the inexactness in derivatives. For $i=2,\ldots,p$ we have additional terms proportional to $\dfrac{\delta_i}{T^i}$, which are better than ones in non-accelerated case $\ls \dfrac{\delta_i}{T^{i-1}} \rs$. Note, that if the gradient error $\delta_1$ is non-zero, our bound does not guarantee the decrease of the objective below $O(\delta_1 \bar{R})$. In the case of exact gradient (i.e. $\delta_1 = 0$) we do not need additional Assumption $\ref{eq:bound}$. 

Assuming that we can control the errors in derivatives, we show how small one should take the errors to make the objective residual smaller than $\e$.

\begin{corollary}\label{lm:inexactenss_acc}
Let assumptions of Theorem \ref{thm:acc_convergence} hold and let $\e > 0$ be the desired solution accuracy.
Further, let the levels of inexactness in Condition \ref{cnd:sampling} satisfy the following inequalities:
\begin{equation}
    \delta_1 \leq \min \left\{\frac{\e}{2(p+1)\bar{R}}; \frac{\e}{(\max\lb\frac{p\sigma}{L_p};\max_{i}\lb\eta_i\rb\rb + 2)\aa{\bar{R}}} \right\} \aa{= O\left( \frac{\e}{\bar{R}}\right)},
\end{equation}
\begin{equation}
    \delta_i \leq \aa{\e^\frac{p+1-i}{p+1}\left(\frac{i!}{((p-1)!)^\frac{i}{p+1}}\frac{\left(\frac{6}{\sqrt{5}}\right)^\frac{p+1-i}{p+1}p^\frac{ip-p-1}{p+1}(2L_p + p\sigma)^\frac{i}{p+1} }{(p+1)^\frac{ip+p+1}{p+1}\left(\frac{i-1}{i} \right)^{i-1}(\eta_i+2)}\right)} = \aa{O\left(\e^\frac{p+1-i}{p+1} \right)}, \quad  i = 2, \ldots, p.
\end{equation}
Also, let the number of iterations $T$ of Algorithm \ref{alg:inexact} satisfy 
\begin{equation}
    T = \aa{\max\lb 1; \frac{R}{\e^\frac{1}{p+1}} \left(\frac{6p}{\sqrt{5}} \right)^\frac{p}{p+1}\left(\frac{p+1}{(p-1)!}\right)^\frac{1}{p+1}(2L_p + p\sigma)^\frac{1}{p+1} - p - 1\rb.}
\end{equation}
Then $x_{t}$ is an $\e$-solution of the problem \eqref{eq:PrSt}, i.e. $f(x_{t})-f(x^{\ast}) \leq \e$ for some $t = 1,\ldots, T$. 
\end{corollary}
\begin{proof}
    Firstly, let us consider the case in Theorem \ref{thm:acc_convergence} when 
    $$f(x_t) - f(x^{\ast}) \leq \ls\max\lb\frac{p\sigma}{L_p};\max_{i}\lb\eta_i\rb\rb + 2\rs\delta_1 \bar{R}.$$
    By our choice of $\delta_1$ we obtain
    $$f(x_t) - f(x^{\ast}) \leq \e.$$
    Secondly, we consider the case, when convergence rate is given by \eqref{eq:acc_convergence}.
    Then, by our choice of $\delta_i$ and $T$ 
    \begin{gather*}
         \aa{2}\delta_1 \bar{R} \leq \frac{\e}{p+1}, \\ 
         \frac{\aa{p} \left( \frac{6\aa{(i-1)}}{\sqrt{5}\aa{i}}\right)^{i-1}(p+1)^{i}(\eta_i + 2)}{i!}\frac{\delta_i R^{i+1}}{(T+p+1)^{i}} \leq  \frac{\e}{p+1},\\
         \frac{\left( \frac{6p }{\sqrt{5}}\right)^{p}}{p!} \frac{(\aa{2}L_p+p\sigma)R^{p+1}}{(T+p+1)^{p+1}}= \frac{\e}{p+1},
    \end{gather*}
    and, from \eqref{eq:acc_convergence}, we have
    \begin{equation*}
        f(x_{T+1}) - f(x^{\ast}) \leq \e.
    \end{equation*}
\end{proof}

Thus, the number of steps sufficient for the Accelerated Inexact Tensor Method to fin an $\e$-solution is the same as for accelerated exact tensor method \cite{nesterov2019implementable} and is equal to $O\ls \ls\dfrac{L_pD^{p+1}}{\e} \rs^\frac{1}{p+1}\rs$ under the assumption that $\delta_1 = O(\e)$ and $\delta_i = O\ls \e^\frac{p-i+1}{p+1}\rs$ for $i = 2, \ldots, p$. Note, that inexactness levers for $i = 2, \ldots, p$ are less strict than in non-accelerated case $O\ls \e^\frac{p-i+1}{p}\rs$.

\section{Implementation Details}\label{sec:implement}
\aa{Inexact tensor optimization methods, introduced in Sections \ref{sec:method}, \ref{sec:acceleration}, 
are based on the solution of auxiliary subproblems \eqref{eq:tensor_prox_algo}, \eqref{eq:acc_prox_x_step} in each iteration.} As we already proved, these problems are convex, and, therefore, can be solved by convex optimization methods. However, the complexity of solving these problems can slow down the convergence of inexact tensor methods. In this section, we show how to treat these problems in the particular case of $p=3$. To do that, we consider the third degree model which corresponds to $p=3$
\begin{equation}\label{eq:model_smooth}
    \zeta_x(h)\stackrel{\text{def}}{=} \omega_{x, 3} (x + h) =\phi_{x, 3}(x + h) + \frac{\delta_1}{2\gamma} + \frac{\gamma\delta_1}{2}\|h\|^2 + \frac{\delta_2}{2}\|h\|^2 + \frac{\delta_3}{6}\|h\|^3 + \frac{\sigma}{8}\|h\|^4.  
\end{equation}
We use the following notation for the first three approximate (sampled) derivatives 
$g = {G}_{x, 1}, ~ B = {G}_{x, 2}, ~ T = {G}_{x, 3} $, which leads to the following expression for the inexact Taylor polynomial: 
\begin{equation}\label{eq:approx_taylor_3ord}
    \phi_x(h)\stackrel{\text{def}}{=}  f\left({x}\right)+{g}^Th + \frac12 h^T{B}h + \frac{1}{6}{T}[h]^3.
\end{equation}

   

\begin{lemma}
For any $h \in \mathbb{R}^n$ and $\tau > 0$, we have
    \begin{equation}\label{eq:hessian_bound_smooth}
        - \frac1\tau B - \frac{1}{\tau}(\gamma\delta_1 + \delta_2)I - \delta_3\|h\|I - \frac{\tau}{2}L_3\|h\|^2I \preccurlyeq T[h] \preccurlyeq \frac1\tau B + \frac{1}{\tau}(\gamma\delta_1 + \delta_2)I + \delta_3\|h\|I + \frac{\tau}{2}L_3\|h\|^2I
    \end{equation}
\end{lemma}
\begin{proof}
    From \eqref{eq:hessian_bound} and definition of $\zeta_x(h)$ we obtain:
    \begin{gather*}
         0 \leq \langle \nabla^2f(x + h)u, u \rangle \leq \langle \nabla^2 \phi_x(h)u, u \rangle + \frac{L_3}{2}\|h\|^2\|u\|^2 + \delta_2\|u\|^2
         + \delta_3\|h\|\|u\|^2 \\
         \leq \langle (B + T[h]) u, u \rangle + \frac{L_3}{2}\|h\|^2\|u\|^2 + \delta_2\|u\|^2
         + \delta_3\|h\|\|u\|^2 
    \end{gather*}
    Replacing $h$ with $\tau h$ and dividing by $\tau$, we get
    \begin{gather*}
         - \langle T[h]u, u \rangle \leq \frac1\tau\langle Bu, u \rangle + \frac\tau 2 L_3\|h\|^2\|u\|^2 + 
         \frac 1\tau\delta_2 \|u\|^2 +  \delta_3 \|h\|\|u\|^2.
    \end{gather*}
    Replacing $h$ by $-h$ we obtain:
    \begin{gather*}
        \langle T[h]u, u \rangle \leq \frac1\tau\langle Bu, u \rangle + \frac\tau 2 L_3\|h\|^2\|u\|^2 +  \frac 1\tau\delta_2 \|u\|^2 +  \delta_3 \|h\|\|u\|^2. 
    \end{gather*}
    From the last two inequalities we get $\eqref{eq:hessian_bound_smooth}$.
   
\end{proof}

In the considered case of $p=3$,  Algorithm \ref{alg:inexact} requires to solve the following minimization problem at each iteration:
\begin{equation}\label{eq:aux_min_zeta}
      \min \limits_{h \in \mathbb{R}^n} \left\{ \zeta_x (h) = \phi_x(h) + \frac{\delta_1}{2\gamma} + (\gamma\delta_1 + \delta_2)d_2(h) + \delta_3d_3(h) + \frac{\sigma}{2}d_4(h)\right\}.
\end{equation}
For any $h \in \mathbb R^n:$
\begin{gather*}
    \nabla^2 \zeta_x (h) = \nabla^2 \phi_x(h) +
    (\gamma\delta_1 + \delta_2)\nabla^2 d_2(h) + \delta_3\nabla^2d_3(h) + \frac{\sigma}{2}\nabla^2d_{4}(h)\\
    = B + T[h]  + (\gamma\delta_1 + \delta_2)\nabla^2 d_2(h) + \delta_3\nabla^2d_3(h) + \frac{\sigma}{2}\nabla^2d_{4}(h)\\
    \stackrel{\eqref{eq:hessian_bound_smooth}}{\succcurlyeq}  
    B   + (\gamma\delta_1 + \delta_2)\nabla^2 d_2(h) + \delta_3\nabla^2d_3(h) + \frac{\sigma}{2}\nabla^2d_{4}(h) - \frac1\tau B - \frac{1}{\tau}(\gamma\delta_1 + \delta_2)I - \delta_3\|h\|I - \frac{\tau}{2}L_3\|h\|^2I \\
    \stackrel{\eqref{eq:ppf2deriv}}{\succcurlyeq} \left(1-\frac1\tau\right)B  + \left(1-\frac1\tau\right)(\gamma\delta_1 + \delta_2)\nabla^2d_2(h) +  \left(1-\frac1\tau\right)\delta_3\nabla^2d_3(h) + \ls \frac{\sigma - \tau L_3}{2}\rs\nabla^2d_4(h).
    \numberthis\label{eq:hessian_left_bnd}
\end{gather*}

On the other hand, for any $h \in \mathbb R^n$: 
\begin{gather*}
    \nabla^2 \zeta_x (h) =\nabla^2 \phi_x(h) +
    (\gamma\delta_1 + \delta_2)\nabla^2 d_2(h) + \delta_3\nabla^2d_3(h) + \frac{\sigma}{2}\nabla^2d_{4}(h)\\
    \stackrel{\eqref{eq:hessian_bound_smooth}}{\preccurlyeq} B + \frac1\tau B + \frac{1}{\tau}(\gamma\delta_1 + \delta_2)I + \delta_3\|h\|I + \frac{\tau}{2}L_3\|h\|^2I +
    (\gamma\delta_1 + \delta_2)\nabla^2 d_2(h) + \delta_3\nabla^2d_3(h) + \frac{\sigma}{2}\nabla^2d_{4}(h)\\
    \stackrel{\eqref{eq:ppf2deriv}}{\preccurlyeq} \left(1+\frac1\tau\right)B  + \left(1+\frac1\tau\right)(\gamma\delta_1 + \delta_2)\nabla^2d_2(h) +  \left(1+\frac1\tau\right)\delta_3\nabla^2d_3(h) + \ls \frac{\sigma + \tau L_3}{2}\rs\nabla^2d_4(h).
    \numberthis\label{eq:hessian_right_bnd}
\end{gather*}

Next, we set $\sigma = \tau^2 L_3$ with $\tau \geq 1$. In view of Theorem \ref{thm:model_cnvxty}, model $\zeta_x(h)$ is convex with this choice of $\sigma$.

Then, from \eqref{eq:hessian_left_bnd}, \eqref{eq:hessian_right_bnd} we obtain
\begin{gather*}
    \left(1-\frac1\tau\right)B  + \left(1-\frac1\tau\right)(\gamma\delta_1 + \delta_2)\nabla^2d_2(h) +  \left(1-\frac1\tau\right)\delta_3\nabla^2d_3(h) + \ls \frac{\sigma - \tau L_3}{2}\rs\nabla^2d_4(h)\preccurlyeq \nabla^2 \zeta_x(h) \\
    \preccurlyeq \frac{1+\tau}{1-\tau}\left(\left(1-\frac1\tau\right)B  + \left(1-\frac1\tau\right)(\gamma\delta_1 + \delta_2)\nabla^2d_2(h) +  \left(1-\frac1\tau\right)\delta_3\nabla^2d_3(h) + \ls \frac{\sigma - \tau L_3}{2}\rs\nabla^2d_4(h) \right).
\end{gather*}

    Let $\rho_x(h) = \frac{1}{2} \left(1 - \frac{1}{\tau}\right)\langle B h, h \rangle+\left(1-\frac1\tau\right)(\gamma\delta_1 + \delta_2)d_2(h) +  \left(1-\frac1\tau\right)\delta_3 d_3(h) + \ls \frac{\sigma - \tau L_3}{2}\rs d_4(h)$. Therefore, we have proved the  relative strong convexity with constant $1$ and relative smoothness with constant $\kappa(\tau) =\frac{1 + \tau}{1 - \tau}$ of the function $\zeta_x(h)$ with respect to the function $\rho_x(h)$ \cite{lu2018relatively}:
    \begin{equation}
        \nabla^2 \rho_x(h) \preccurlyeq \nabla^2 \zeta_x(h) \preccurlyeq \frac{1+\tau}{1-\tau} \nabla^2 \rho_x(h)  .
    \end{equation}


The relative smoothness condition allows to  solve the auxiliary problem \eqref{eq:aux_min_zeta} very efficiently \cite{nesterov2019implementable,lu2018relatively} by the iterates
\begin{equation}\label{eq:sub_algo}
    h_{k+1}=\arg \min _{h \in \mathbb{R}^n}\left\{\left\langle\nabla \zeta_x(h_{k}), h \right\rangle+\kappa(\tau) \beta_{\rho_{x}}\left(h_{k}, h\right)\right\}
\end{equation}
with linear rate of convergence. 

\aa{Note, that in the accelerated method (Algorithm \ref{alg:inexact_acc}) subproblem \eqref{eq:acc_prox_x_step} is different than minimizing the model $\zeta_x(h)$ and has the form:
$$\min\limits_x\lb \phi_{x,3}(x+h) +  \frac{\delta_2}{2}\|h\|^2 + \frac{\delta_3}{6}\|h\|^3 + \frac{\sigma}{8}\|h\|^4 \rb.$$
However, this objective function also satisfies relative strong convexity and relative smoothness conditions. Indeed, the term $\frac{\delta_1}{2\gamma}$ does not correspond to convexity or smoothness. Therefore, we can omit it and the analysis will be the same. We set $\gamma = 0$ in equation \eqref{eq:model_smooth} and the reference function
$$\rho_x(h) = \frac{1}{2} \left(1 - \frac{1}{\tau}\right)\langle B h, h \rangle+\left(1-\frac1\tau\right)\delta_2d_2(h) +  \left(1-\frac1\tau\right)\delta_3 d_3(h) + \ls \frac{\sigma - \tau L_3}{2}\rs d_4(h).$$
Then, the objective in subproblem \eqref{eq:acc_prox_x_step} also satisfies relative smoothness and strong convexity condition. Therefore, it can also be  solved by the process \eqref{eq:sub_algo}.
}

 According to \cite{nesterov2019implementable} it is not necessary to calculate the full third derivative tensor $T$ in the above derivations. It is sufficient to use an automatic differentiation technique to calculate third-order derivative in a certain direction.

\section{Stochastic Tensor and Accelerated Tensor Methods} \label{sec:smpl}
 In this section, we apply Inexact Tensor Method and Accelerated Inexact Tensor Method to solve stochastic optimization problem in the online \eqref{eq:online_intro} and offline \eqref{eq:offline_intro} settings. The main step to do that is to find sufficient conditions for Condition \ref{cnd:sampling} to be satisfied in these two settings.  
To do that we first introduce an additional assumption on the objective function $f$:
    \begin{assumption}\label{as:add_lip}
        The function $f(x)$ and its derivatives $\nabla f(x), \ldots , \nabla^{p-1}f(x)$ are Lipschitz continuous:
        $$\|\nabla^i f(x) - \nabla^i f(y)\| \leq L_i \|x - y\|, i=0, \ldots, p-1.$$
    \end{assumption}

In the stochastic versions of Algorithms \ref{alg:inexact}, \ref{alg:inexact_acc} for the stochastic optimization problem \eqref{eq:PrSt} in the online \eqref{eq:online_intro} and offline \eqref{eq:offline_intro} settings we sample stochastic derivatives in each iteration in order to form mini-batch approximations for the derivatives of $f$. 

More precisely, for $\mathcal{S}_1, \mathcal{S}_2, \ldots, \mathcal{S}_p$ being sample sets for each derivative, we set
\begin{equation*}
\begin{aligned}
    {G}_{{x}, i} = \frac{1}{\left|\mathcal{S}_i\right|} \sum_{j \in \mathcal{S}_i} \nabla^i f\left({x}, \xi_j\right), \quad i=1,..,p.
    \end{aligned}
\end{equation*}

In the next subsections we show how to choose the size of the sample sets to satisfy Condition $\ref{cnd:sampling}$ for both online and offline settings.
    
\subsection{Online Setting}
For the online setting, i.e. when \eqref{eq:online_intro} holds, we need one more assumption to be able to satisfy Condition $\ref{cnd:sampling}$.
\begin{assumption}\label{as:stoch}
    For all $i = 1, 2, \ldots, p, ~ \xi,$ and $ x \in \R^n$:
    $$
    \| \nabla^i f({x}, \xi)-\nabla^i f({x}) \| \leq M_i.
    $$
\end{assumption}
     
      From the following tensor concentration bound theorem we derive the required conditions.

    \begin{theorem}[Tensor Hoeffding Inequality \cite{lucchi2019stochastic}]
         Let $\mathcal{X}$ be a sum of $m$ i.i.d. tensors $\mathcal{Y}_{i} \in \mathbb{R}^{d_{1} \times \cdots \times d_{k}}$. Let ${u}_{1}, \ldots {u}_{k}$ be such that $\left\|{u}_{i}\right\|=1$ and assume that for each tensor $\mathcal{Y}_{i}$, it holds that $ a \leq \mathcal{Y}_{i}\left({u}_{1}, \ldots, {u}_{k}\right) \leq$b. Let
         $\sigma:=(b-a)$.  Then we have 
            $$
            P(\|\mathcal{X}-\mathbb{E} \mathcal{X}\| \geq t) \leq k_{0}^{\left(\sum_{i=1}^{k} d_{i}\right)} \cdot 2 \exp \left(-\frac{t^{2}}{2 m \sigma^{2}}\right)
            $$
        where $k_{0}=\left(\frac{2 k}{\log (3 / 2)}\right)$.
    \end{theorem}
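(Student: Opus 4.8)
The plan is to deduce the tensor tail bound from the classical scalar Hoeffding inequality applied to a single fixed multilinear evaluation, and then to upgrade this to a bound on $\|\mathcal{X}-\E\mathcal{X}\|=\sup_{\|\mathbf{u}_1\|=\dots=\|\mathbf{u}_k\|=1}|(\mathcal{X}-\E\mathcal{X})(\mathbf{u}_1,\dots,\mathbf{u}_k)|$ by discretizing the product of unit spheres with an $\eta$-net and paying for the discretization through a union bound. First I would fix unit vectors $\mathbf{u}_1,\dots,\mathbf{u}_k$ and set $Z_i:=\mathcal{Y}_i(\mathbf{u}_1,\dots,\mathbf{u}_k)-\E\,\mathcal{Y}_i(\mathbf{u}_1,\dots,\mathbf{u}_k)$; under the boundedness hypothesis these are i.i.d., centered, and supported in an interval of length $b-a=\sigma$, while $(\mathcal{X}-\E\mathcal{X})(\mathbf{u}_1,\dots,\mathbf{u}_k)=\sum_{i=1}^n Z_i$, so Hoeffding's inequality yields $P(|\sum_{i=1}^n Z_i|\ge s)\le 2\exp(-2s^2/(n\sigma^2))$ for every $s>0$.

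Next I would discretize: for each $j$ take an $\eta$-net $N_j\subset S^{d_j-1}$ of cardinality at most $(1+2/\eta)^{d_j}$, so the product net $N=N_1\times\dots\times N_k$ has at most $(1+2/\eta)^{\sum_j d_j}$ elements. The key deterministic step is the comparison lemma: for any $k$-tensor $\mathcal{T}$ one has $\|\mathcal{T}\|\le(1-k\eta)^{-1}\max_{(\mathbf{v}_1,\dots,\mathbf{v}_k)\in N}|\mathcal{T}(\mathbf{v}_1,\dots,\mathbf{v}_k)|$, which follows by picking $\mathbf{v}_j\in N_j$ with $\|\mathbf{u}_j-\mathbf{v}_j\|\le\eta$, telescoping $\mathcal{T}(\mathbf{u})-\mathcal{T}(\mathbf{v})=\sum_{j=1}^k\mathcal{T}(\mathbf{u}_1,\dots,\mathbf{u}_{j-1},\mathbf{u}_j-\mathbf{v}_j,\mathbf{v}_{j+1},\dots,\mathbf{v}_k)$, and bounding each term by $\eta\|\mathcal{T}\|$ via multilinearity. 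Choosing $\eta=1/(2k)$ produces the factor $2$, hence $\{\|\mathcal{X}-\E\mathcal{X}\|\ge t\}\subseteq\{\max_{N}|(\mathcal{X}-\E\mathcal{X})(\cdot)|\ge t/2\}$.

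Finally, applying the scalar bound with $s=t/2$ at each of the at most $(1+4k)^{\sum_j d_j}$ net points and taking a union bound gives $P(\|\mathcal{X}-\E\mathcal{X}\|\ge t)\le(1+4k)^{\sum_j d_j}\cdot 2\exp(-t^2/(2n\sigma^2))$, after which it only remains to absorb $1+4k$ into $k_0=2k/\log(3/2)$ by an elementary inequality (valid for $k\ge2$, with $k=1$ treated directly, or by slightly shrinking the net radius $\eta$). I expect the main obstacle to be exactly this passage from one fixed evaluation to the supremum over the infinite, strongly correlated family $\{(\mathcal{X}-\E\mathcal{X})(\mathbf{u}_1,\dots,\mathbf{u}_k)\}$, together with the bookkeeping of the covering numbers and of the constant $\eta$ so that both the exponent $-t^2/(2n\sigma^2)$ and the prefactor $k_0^{\sum_j d_j}$ come out precisely as stated; the Hoeffding estimate and the union bound themselves are routine.
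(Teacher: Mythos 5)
The paper does not actually prove this statement: it is imported verbatim from \cite{lucchi2019stochastic} and used as a black box, so there is no in-paper proof to compare against. Your plan --- scalar Hoeffding at a fixed tuple of unit vectors, an $\eta$-net on the product of spheres with the telescoping multilinear comparison $\|\mathcal{T}\|\le(1-k\eta)^{-1}\max_{N}|\mathcal{T}(\cdot)|$, and a union bound --- is the standard route and is essentially the argument given in the cited source; the arithmetic also checks out, since $\eta=1/(2k)$ turns the Hoeffding exponent $-2s^2/(n\sigma^2)$ at $s=t/2$ into exactly the stated $-t^2/(2n\sigma^2)$. Two small points worth making explicit: (i) the boundedness hypothesis must be read as holding for \emph{all} unit tuples (or at least all net points), not just one fixed tuple as the theorem's wording literally suggests --- your proof silently uses the uniform reading, which is the intended one; (ii) the prefactor bookkeeping $(1+4k)^{\sum_j d_j}$ versus $k_0^{\sum_j d_j}$ with $k_0=2k/\log(3/2)\approx 4.93k$ indeed fails for $k=1$ (where $5>4.93$), so your caveat about treating $k=1$ separately or re-tuning $\eta$ is not optional; with it, the argument is complete.
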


    This Theorem allows us to provide a sufficient condition on the sample sets  $\mathcal{S}_i$ in order to satisfy Condition $\ref{cnd:sampling}$. 
    \begin{lemma}\label{lem:batch_size_online} Let Assumptions \ref{as:lip}, \ref{as:add_lip}, \ref{as:stoch} be satisfied. Then, for any fixed small constants $\delta_i > 0$ we can choose the sizes of the sample sets $\mathcal{S}_i$ 
    in equation $\eqref{eq:sampled_derivs}$ to be 
            $$\left|\mathcal{S}_i\right| = n_i = \tilde{\mathcal{O}}\left((L_{i - 1} + M_i)^{2} \cdot \delta_i^{-2}\right), i=1, \ldots, p 
            $$
    so that with probability at least $1 - \delta$ Condition \ref{cnd:sampling} is satisfied. 
    \end{lemma}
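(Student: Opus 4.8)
The plan is to control the error $\|(\mathbf{G}_{\bx_k,i}-\nabla^i f(\bx_k))[\bs]^{i-1}\|$ for a \emph{fixed} direction-free quantity by bounding the tensor error $\|\mathbf{G}_{\bx_k,i}-\nabla^i f(\bx_k)\|$ in the induced tensor norm and then using $\|(\mathbf{G}_{\bx_k,i}-\nabla^i f(\bx_k))[\bs]^{i-1}\|\le \|\mathbf{G}_{\bx_k,i}-\nabla^i f(\bx_k)\|\cdot\|\bs\|^{i-1}$, so that it suffices to force $\|\mathbf{G}_{\bx_k,i}-\nabla^i f(\bx_k)\|\le \kappa_i\e^{(p-i+1)/p}$ with high probability. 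This removes any dependence on the unknown step length $\|\bs\|$, which is exactly what is needed for an implementable algorithm. Since $\mathbf{G}_{\bx_k,i}=\frac{1}{|\mathcal{S}_i|}\sum_{j\in\mathcal{S}_i}\nabla^i f(\bx_k,\xi_j)$ is an average of i.i.d.\ tensors with mean $\nabla^i f(\bx_k)$, the Tensor Hoeffding Inequality applies.

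The key steps, in order: First, fix an iterate $\bx_k\in\mathcal{L}(\bx_0)$ and a target $i$. By Assumption~\ref{as:stoch} each sampled tensor $\nabla^i f(\bx_k,\xi_j)$ satisfies $\|\nabla^i f(\bx_k,\xi_j)-\nabla^i f(\bx_k)\|\le M_i$, so for any unit directions $\mathbf{u}_1,\dots,\mathbf{u}_i$ the scalar $\nabla^i f(\bx_k,\xi_j)[\mathbf{u}_1,\dots,\mathbf{u}_i]$ lies in an interval of length at most $2M_i$ around $\nabla^i f(\bx_k)[\mathbf{u}_1,\dots,\mathbf{u}_i]$; hence in the notation of the Tensor Hoeffding Inequality one may take $\sigma$ of order $M_i$. (Here I expect the paper actually wants $(L_{i-1}+M_i)$ rather than $M_i$ because to verify Condition~\ref{cnd:sampling} on the whole set $\mathcal{L}(\bx_0)$ rather than at a single point, one couples the bound at $\bx_k$ with the Lipschitz bound $\|\nabla^{i}f(\bx)-\nabla^{i}f(\by)\|\le L_i\|\bx-\by\|$ from Assumption~\ref{as:add_lip} to transfer the concentration estimate over a net of $\mathcal{L}(\bx_0)$; the $L_{i-1}$ label is presumably an indexing convention for the relevant Lipschitz constant.) Second, apply the inequality with $n=|\mathcal{S}_i|$, $t=\kappa_i\e^{(p-i+1)/p}$: the failure probability is bounded by $k_0^{(\sum_j d_j)}\cdot 2\exp\!\big(-t^2/(2n\sigma^2)\big)$. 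Third, set this $\le \delta/p$ (so a union bound over $i=1,\dots,p$ gives total failure probability $\le\delta$) and solve for $n$: one gets
\[
  n_i \;=\; \Theta\!\left(\frac{M_i^2}{\kappa_i^2}\,\e^{-2(p-i+1)/p}\cdot\log\frac{k_0^{\,\Theta(n\,\cdot)}\,p}{\delta}\right),
\]
and since the dimensional factor $\sum_j d_j$ contributes only $\mathrm{poly}(n)$ inside a logarithm (and the net cardinality contributes only $\log$ terms), it is absorbed into the $\tilde{\mathcal{O}}$, yielding $|\mathcal{S}_i|=\tilde{\mathcal{O}}\big((L_{i-1}+M_i)^2\kappa_i^{-2}\e^{-2(p-i+1)/p}\big)$. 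Fourth, union-bound over the $p$ derivatives and, if the statement is to hold along the whole trajectory, also over the iterations (absorbed into $\tilde{\mathcal{O}}$), to conclude that Condition~\ref{cnd:sampling} holds with probability at least $1-\delta$.

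The main obstacle is the passage from a pointwise concentration bound (valid at the fixed $\bx_k$) to the uniform statement over $\bs\in\mathcal{L}(\bx_0)$ that Condition~\ref{cnd:sampling} literally demands: this requires a covering/net argument over $\mathcal{L}(\bx_0)$ together with the extra Lipschitz Assumption~\ref{as:add_lip} to control the oscillation of $\mathbf{G}_{\bx,i}-\nabla^i f(\bx)$ between net points, and it is the reason the Lipschitz constants $L_{i-1}$ enter the batch size. A secondary technical point is making the self-referential dependence of $n$ on $\log n$ (through the dimensional prefactor $k_0^{\sum_j d_j}$) rigorous — this is routine, since $\log n$ grows slower than $n$, so a fixed point of the form $n=\tilde\Theta(\cdots)$ exists — but it must be stated carefully to justify hiding it in $\tilde{\mathcal{O}}$.
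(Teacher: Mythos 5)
Your core argument --- bound the tensor operator-norm error $\|\mathbf{G}_{\bx_k,i}-\nabla^i f(\bx_k)\|$ by $t=\kappa_i\e^{(p-i+1)/p}$ via the Tensor Hoeffding Inequality, solve the resulting inequality for $n_i$, and then pass to arbitrary directions through $\|(\mathbf{G}_{\bx_k,i}-\nabla^i f(\bx_k))[\bs]^{i-1}\|\le\|\mathbf{G}_{\bx_k,i}-\nabla^i f(\bx_k)\|\,\|\bs\|^{i-1}$ --- is exactly the paper's proof. However, your account of where $L_{i-1}$ comes from is not the paper's, and the ``main obstacle'' you identify is not an obstacle. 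The paper gets the range parameter by bounding the \emph{uncentered} tensors: by the triangle inequality and Assumptions \ref{as:add_lip} and \ref{as:stoch}, $\|\nabla^i f(\bx,\xi_j)\|\le\|\nabla^i f(\bx,\xi_j)-\nabla^i f(\bx)\|+\|\nabla^i f(\bx)\|\le M_i+L_{i-1}=:\sigma_i$ (Lipschitz continuity of the $(i-1)$-st derivative bounds the norm of the $i$-th), and this $\sigma_i$ is fed directly into the Hoeffding bound. No covering net over $\mathcal{L}(\bx_0)$ is used or needed: the quantifier in Condition \ref{cnd:sampling} ranges over the direction $\bs$ at the fixed current iterate where the derivatives are freshly sampled, and uniformity in $\bs$ is free once the operator norm is controlled --- which is precisely your own first step, so your proposed net argument is superfluous. (Your variant, applying the concentration bound to the centered tensors with $\sigma\sim M_i$, is also valid and in fact slightly sharper; it is dominated by the stated $\tilde{\mathcal{O}}\bigl((L_{i-1}+M_i)^2\kappa_i^{-2}\e^{-2(p-i+1)/p}\bigr)$.) Finally, the exponent in the prefactor $k_0^{\sum_j d_j}$ equals $i$ times the ambient dimension of $\mathbb{R}^n$, not the sample size $n_i$, so there is no self-referential dependence of $n_i$ on $\log n_i$ to resolve; the factor $\log(2k_0^{\sum_j d_j}/\delta)$ is simply absorbed into the $\tilde{\mathcal{O}}$.
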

    
    \begin{proof}
        Using Assumptions \ref{as:lip}, \ref{as:stoch} and the triangle inequality, we obtain 
        $$
        \|{G}_{x, i}\| = \frac{1}{n_i} \sum \limits_{j = 1}^{n_i} \|\nabla^i f(x, \xi_j) \| \leq \frac{1}{n_i} \sum \limits_{j = 1}^{n_i} \left( \| \nabla^i f(x, \xi_j) - \nabla^i f(x) \| + \|\nabla^i f(x) \|\right) \leq M_i + L_{i-1} \stackrel{\text{def}}{=} \sigma_i.
        $$
        Then, the proof completely repeats the proof of Lemma 11 in \cite{lucchi2019stochastic}. 
        We require the probability of a large deviation to be smaller than $\delta \in (0, 1]$:
        $$
            \mathbf{P}\left\{\|G_{x, i}-\nabla^{i} f({x})\right\|>t\} \leq k_{0}^{ni 
            } \cdot 2 \exp \left(-\frac{t^{2} n_{i}}{2 \sigma_{i}^{2}}\right) \leq \delta.
        $$
        Taking the logarithm of both sides, we get 
        $$
            -\frac{t^{2} n_{i}}{2 \sigma_{i}^{2}} \leq \log \frac{\delta}{2 k_{0}^{ni}}
        $$
        which is equivalent to
        $$
            n_{i} \geq \frac{2 \sigma_{i}^{2}}{t^{2}} \log \frac{2 k_{0}^{ni}}{\delta}.
        $$
        Finally, we can simply choose $t=\delta_i$ in order to satisfy  \eqref{eq:sampling_cnd} in Condition $\ref{cnd:sampling}$ since then, with probability at least $1-\delta$, $\forall y \in \mathbb{R}^n$,
        $$
            \begin{aligned}
                \left\|{G}_{x, i}[{y - x}]^{i-1}-\nabla^{i} f({x})[y - x]^{i-1}\right\|
                 \leq\left\|{G}_{x, i}-\nabla^{i} f({x})\right\|\|y - x\|^{i-1} 
                 \leq \delta_i\|y - x\|^{i-1}.
            \end{aligned}
        $$
    \end{proof}
    
    Next, we obtain the required batch sizes for our stochastic tensor methods in order to achieve convergence rates as in the case of exact derivatives. The following result immediately follows from the Lemma above and Corollary \ref{lm:inexactenss_nonacc}.  
    
    \begin{corollary}\label{col:online}
        Let Assumptions \ref{as:lip}, \ref{as:add_lip}, \ref{as:stoch} be satisfied. Then, we can choose the sizes $|\mathcal{S}_i|$ of sample sets $\mathcal{S}_i$ in \eqref{eq:sampled_derivs} to be 
        $$n_i = |\mathcal{S}_i| = \tilde{\mathcal{O}}\left((L_{i - 1} + M_i)^{2} \cdot {\e^{-2\frac{p -i + 1}{p}}}\right), i=1\ldots p$$
        so that with probability at least $1 - \delta$ Stochastic Tensor Method has complexity $T = O\left(\frac{1}{\e^{1/p}}\right)$.
    \end{corollary}
    
    A similar result can be obtained for the Accelerated Stochastic Tensor Method.
    
    \begin{corollary}\label{col:online_acc}
        Let Assumptions \ref{as:lip}, \ref{as:add_lip}, \ref{as:stoch} be satisfied. Then, we can choose the sizes $|\mathcal{S}_i|$ of sample sets $\mathcal{S}_i$ in \eqref{eq:sampled_derivs} to be 
        \begin{gather}
             n_1 = |\mathcal{S}_1| = \tilde{\mathcal{O}}\left({(L_0 + M_1)^{2}} \cdot {\e^{-2}}\right),\\
            n_i = |\mathcal{S}_i| = \tilde{\mathcal{O}}\left({(L_{i - 1} + M_i)^{2}} \cdot 
            {\e^{-2\frac{p -i + 1}{p + 1}}}\right), i=2, \ldots, p
        \end{gather}
        so that with probability at least $1 - \delta$ Accelerated Stochastic Tensor Method has complexity $T = O\left(\frac{1}{\e^{1/(p + 1)}}\right)$.
    \end{corollary}
    
    Corollaries \ref{col:online}, \ref{col:online_acc} show how to choose batch sizes for derivatives in inexact tensor methods to achieve convergence as 
    in exact case for online setting. Gradient sample sizes of non-accelerated and accelerated tensor methods are the same. For the case of higher derivatives (of order $\geq 2$), we need to use fewer samples for the accelerated method than for the non-accelerated one. Moreover, batch sizes are decreasing with the growth of derivatives order.

\subsection{Offline setting}
For the offline setting, i.e. when \eqref{eq:offline_intro} holds, we use the following result to provide a sufficient condition for Condition $\ref{cnd:sampling}$ to hold.
\begin{theorem}[Tensor Hoeffding--Serfling Inequality \cite{lucchi2019stochastic}]\label{thm:hoeff-serfl}
        Let $\mathcal X$ be a sum of $m$ tensors $\mathcal Y_i \in \mathbb R^{d_1 \times ... \times d_k}$, sampled without replacement from a finite population $\mathcal A$ of size $N$. Let ${u}_i, ..., {u}_k$ be such that $\|{u}_i\| = 1$ and assume that for each tensor $i$, $a \le \mathcal Y_i({u}_i, ..., {u}_k) \le b$. Let $\sigma := (b - a)$, then we have 
        \begin{equation}\label{eq:tensor_hoeffding-serfling}
            P ( \| \mathcal X - \mathbb E \mathcal X \| \ge t) \le 
            k_0^{\sum_{i = 1}^k d_i} \cdot 2 \exp \bigg( - \frac{t^2 m^2}{2 \sigma^2 (m + 1) (1 - m / N)} \bigg),
        \end{equation}
        where $k_0 = \frac{2k}{\log(3 / 2)}$.
    \end{theorem}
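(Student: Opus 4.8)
The plan is to mirror, for sampling without replacement, the same two-step recipe that produces the Tensor Hoeffding Inequality quoted just above: first reduce the operator-norm deviation of the random tensor to a one-dimensional deviation by discretizing the product of unit spheres, and then feed in the scalar concentration inequality matched to the sampling scheme --- here the scalar Hoeffding--Serfling inequality.

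\textbf{Step 1: net reduction.} I would apply the argument to the fixed random tensor $\mathcal{T} := \mathcal{X} - \mathbb{E}\mathcal{X}$. For each mode $j = 1,\dots,k$ choose an $\epsilon$-net $\mathcal{N}_j$ of the unit sphere of $\mathbb{R}^{d_j}$ of cardinality at most $(1 + 2/\epsilon)^{d_j}$. Writing arbitrary unit vectors $\mathbf{u}_1,\dots,\mathbf{u}_k$ in terms of net approximants $\mathbf{v}_j$ with $\|\mathbf{u}_j - \mathbf{v}_j\| \le \epsilon$, multilinearity together with a telescoping estimate over the $k$ modes gives $|\mathcal{T}(\mathbf{u}_1,\dots,\mathbf{u}_k) - \mathcal{T}(\mathbf{v}_1,\dots,\mathbf{v}_k)| \le k\epsilon\,\|\mathcal{T}\|$, hence $\|\mathcal{T}\| \le (1 - k\epsilon)^{-1}\max_{\mathbf{v}_j \in \mathcal{N}_j}|\mathcal{T}(\mathbf{v}_1,\dots,\mathbf{v}_k)|$ whenever $k\epsilon < 1$. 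Picking $\epsilon$ small enough that $(1-k\epsilon)^{-1}$ is an absolute constant (e.g.\ $\epsilon = 1/(2k)$) costs only a constant factor in the threshold and bounds the total net size by $\prod_j |\mathcal{N}_j| \le k_0^{\sum_j d_j}$ with $k_0 = 2k/\log(3/2)$, the precise base depending on exactly how the covering-number bound and the net scale are tuned. A union bound over $\mathcal{N}_1 \times \cdots \times \mathcal{N}_k$ then reduces the claim to a scalar tail bound for $(\mathcal{X} - \mathbb{E}\mathcal{X})(\mathbf{v}_1,\dots,\mathbf{v}_k)$ at a threshold proportional to $t$.

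\textbf{Step 2: scalar Hoeffding--Serfling.} Fix $\mathbf{v}_1,\dots,\mathbf{v}_k$ with $\|\mathbf{v}_j\| = 1$ and set $Z_i := \mathcal{Y}_i(\mathbf{v}_1,\dots,\mathbf{v}_k) \in [a,b]$; then $Z_1,\dots,Z_n$ is a sample drawn without replacement from the population $\{\,\mathcal{Z}(\mathbf{v}_1,\dots,\mathbf{v}_k) : \mathcal{Z}\in\mathcal{A}\,\}$ of $N$ reals, and $(\mathcal{X}-\mathbb{E}\mathcal{X})(\mathbf{v}_1,\dots,\mathbf{v}_k)$ is, up to the normalization of $\mathcal{X}$, the centered partial sum $\sum_{i=1}^n (Z_i - \mu)$ with $\mu$ the population mean. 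Here I would invoke the scalar Hoeffding--Serfling inequality for sampling without replacement (Serfling 1974; Bardenet--Maillard 2015): compared with the independent case it replaces the variance proxy by one carrying the finite-population correction, producing exactly a sub-Gaussian tail with the $(n+1)(1 - n/N)$-type factor in the denominator of \eqref{eq:tensor_hoeffding-serfling}. Combining this two-sided estimate with the union bound over the $k_0^{\sum_j d_j}$ net points from Step 1 yields the stated inequality; the $n$- and $N$-dependence is inherited verbatim from the scalar bound.

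\textbf{Main obstacle.} The only nontrivial ingredient is the scalar input of Step 2: establishing a sub-Gaussian tail for partial sums of a without-replacement sample with the sharp finite-population factor. This does not follow from a direct Hoeffding/Azuma bound applied naively to the sampled values; the standard routes are Serfling's reverse-martingale representation of the sampling process, or exploiting the negative association of the without-replacement inclusion indicators, in either case followed by a Chernoff estimate on the resulting (super)martingale. Step 1, by contrast, is routine and structurally identical to the reduction behind the Tensor Hoeffding Inequality already quoted; the only care needed there is tracking the covering number so as to land on the constant $k_0 = 2k/\log(3/2)$.
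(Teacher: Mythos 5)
The paper gives no proof of this statement: it is imported verbatim, with citation, from \cite{lucchi2019stochastic}, so there is nothing in the paper to compare your argument against. Your two-step outline (an $\epsilon$-net over the product of unit spheres to reduce the operator norm of $\mathcal X - \mathbb{E}\mathcal X$ to scalar deviations, followed by the scalar Hoeffding--Serfling bound of Serfling/Bardenet--Maillard and a union bound) is exactly the route taken in that reference for both tensor concentration lemmas, and it is sound. The only point to watch is bookkeeping: for the exponent $t^2 n^2/\bigl(2\sigma^2(n+1)(1-n/N)\bigr)$ to come out as stated, $\mathcal X$ must effectively be treated as the averaged sample (your ``up to the normalization of $\mathcal X$'' hedge), and the loss from the net constant $(1-k\epsilon)^{-1}$ has to be absorbed into the factor $2$ in the denominator, as in the with-replacement version quoted just above it.
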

    
    The next lemma follows from Theorem \ref{thm:hoeff-serfl}. The proof can be found in \cite{lucchi2019stochastic}.
    
    \begin{lemma}\label{lem:batch_size_offline} Let Assumptions \ref{as:lip}, \ref{as:add_lip} be satisfied. Then, for any fixed small constants $\delta_i > 0$ we can choose the sizes $\left|\mathcal{S}_i\right|$ of sample sets $\mathcal{S}_i$ in \eqref{eq:sampled_derivs} to be 
            $$n_i = \left|\mathcal{S}_i\right|= \tilde{\mathcal{O}}\left( \frac{\delta_i^2
            }{L_{i - 1}^2} + \frac{1}{{m}} \right)^{-1}$$
    so that with probability at least $1 - \delta$ Condition \ref{cnd:sampling} holds.
    \end{lemma}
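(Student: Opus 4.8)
The plan is to follow the same route as the proof of Lemma \ref{lem:batch_size_online} for the online setting, only replacing the Tensor Hoeffding Inequality by the Tensor Hoeffding--Serfling Inequality of Theorem \ref{thm:hoeff-serfl}, which is the correct concentration tool when the mini-batch $\mathcal{S}_i$ is drawn uniformly \emph{without replacement} from the finite population $\{1,\dots,m\}$ in \eqref{eq:problem_offline}. Under Assumptions \ref{as:lip} and \ref{as:add_lip} the finite-population correction factor $1-n/N$ appearing in \eqref{eq:tensor_hoeffding-serfling} is exactly what will turn into the additive term $1/m$ in the bound on $n_i$, encoding the obvious fact that one never needs more than $m$ samples.

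First I would fix the range parameter. By Assumption \ref{as:add_lip} applied to each summand $f_j$, the map $\nabla^{i-1}f_j$ is $L_{i-1}$-Lipschitz on $\mathcal{L}(\bx_0)$, hence $\|\nabla^i f_j(\bx)\|\le L_{i-1}$, so for any unit directions $\bu_1,\dots,\bu_i$ the scalars $\nabla^i f_j(\bx)[\bu_1,\dots,\bu_i]$ lie in $[-L_{i-1},L_{i-1}]$ and the range required in Theorem \ref{thm:hoeff-serfl} is $\sigma_i=2L_{i-1}$. Next, since $\mathcal{S}_i$ is a uniform sample without replacement, $\mathbf{G}_{\bx,i}=\frac1{n_i}\sum_{j\in\mathcal{S}_i}\nabla^i f_j(\bx)$ is unbiased for $\nabla^i f(\bx)$, i.e. $\|\mathbf{G}_{\bx,i}-\nabla^i f(\bx)\|=\frac1{n_i}\|\mathcal{X}-\mathbb{E}\mathcal{X}\|$ for $\mathcal{X}=\sum_{j\in\mathcal{S}_i}\nabla^i f_j(\bx)$. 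Applying Theorem \ref{thm:hoeff-serfl} with $n=n_i$, $N=m$, range $\sigma_i$, and deviation level $t=\kappa_i\e^{(p-i+1)/p}$ for the average, and requiring the right-hand side of \eqref{eq:tensor_hoeffding-serfling} to be at most $\delta/p$, I would take logarithms and simplify the factor $(n_i+1)(1-n_i/m)$ to arrive at an inequality of the form
\[
    n_i\;\gtrsim\;\frac{\widetilde{\mathcal{O}}(1)\,L_{i-1}^2}{\kappa_i^2\,\e^{2(p-i+1)/p}}\Big(1-\frac{n_i}{m}\Big),\qquad \widetilde{\mathcal{O}}(1)=\Theta\!\Big(\log\tfrac{2k_0^{\sum_\ell d_\ell}}{\delta/p}\Big).
\]
Collecting the $n_i/m$ term on the left and solving for $n_i$ yields
\[
    n_i\;=\;\widetilde{\mathcal{O}}\!\left(\frac{\kappa_i^2\,\e^{2(p-i+1)/p}}{L_{i-1}^2}+\frac1m\right)^{-1},
\]
which is the claimed bound. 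A union bound over $i=1,\dots,p$ makes all $p$ deviation events hold simultaneously with probability at least $1-\delta$, and on that event, exactly as at the end of the proof of Lemma \ref{lem:batch_size_online},
\[
    \big\|(\mathbf{G}_{\bx,i}-\nabla^i f(\bx))[\bs]^{i-1}\big\|\le\big\|\mathbf{G}_{\bx,i}-\nabla^i f(\bx)\big\|\,\|\bs\|^{i-1}\le\kappa_i\,\e^{(p-i+1)/p}\|\bs\|^{i-1}
\]
for all $\bs$, i.e. Condition \ref{cnd:sampling} holds.

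The main obstacle is not conceptual but bookkeeping: one must push the factor $(n_i+1)(1-n_i/m)$ from the exponent of \eqref{eq:tensor_hoeffding-serfling} through the logarithm and the resulting implicit inequality for $n_i$ so that it emerges precisely as the additive $1/m$ inside the parentheses rather than as a multiplicative correction, and one must keep careful track of the normalization between the sum $\mathcal{X}$ and the average $\mathbf{G}_{\bx,i}$. This is exactly the computation carried out in \cite{lucchi2019stochastic}, which can be invoked verbatim; everything else --- the uniform range bound from Assumption \ref{as:add_lip} and the passage from the tensor-norm deviation to the directional estimate \eqref{eq:sampling_cnd} --- is identical to the online case treated in Lemma \ref{lem:batch_size_online}.
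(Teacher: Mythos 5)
Your proposal is correct and takes essentially the same route as the paper, which in fact omits the proof entirely and simply cites \cite{lucchi2019stochastic}: you reconstruct exactly that argument by mirroring the proof of Lemma \ref{lem:batch_size_online} with the Hoeffding--Serfling bound of Theorem \ref{thm:hoeff-serfl} in place of the Hoeffding bound, and your algebra turning the finite-population factor $1-n_i/m$ into the additive $1/m$ term is right. The only point worth noting is that you correctly read Assumption \ref{as:add_lip} as applying to each summand $f_j$ (so that the range is $2L_{i-1}$), which is what the lemma implicitly requires even though the paper states the assumption only for $f$.
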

    
    Similar to the previous subsection we obtain the following results. 
    
    \begin{corollary}\label{col:offline}
        Let Assumptions \ref{as:lip}, \ref{as:add_lip} be satisfied. Then, we can choose the sizes $|\mathcal{S}_i|$ of sample sets $\mathcal{S}_i$ in \eqref{eq:sampled_derivs} to be 
        $$n_i = |\mathcal{S}_i| =
        \tilde{\mathcal{O}}\left( \frac{\e^{2\frac{p -i + 1}{p}}
            }{L_{i - 1}^2} + \frac{1}{{m}} \right)^{-1}, i=1\ldots n
        $$
        so that with probability at least $1 - \delta$ Stochastic Tensor Method has complexity $T = O\left(\frac{1}{\e^{1/p}}\right)$.
    \end{corollary}

    \begin{corollary}\label{col:offline_acc}
        Let Assumptions \ref{as:lip}, \ref{as:add_lip} be satisfied. Then, we can choose the sizes $|\mathcal{S}_i|$ of sample sets $\mathcal{S}_i$ in \eqref{eq:sampled_derivs} to be 
        $$n_i = |\mathcal{S}_i| = \tilde{\mathcal{O}}\left( \frac{\e^{2\frac{p -i + 1}{p + 1}}
            }{L_{i - 1}^2} + \frac{1}{{m}} \right)^{-1}, i=2\ldots n$$
        so that with probability at least $1 - \delta$ Accelerated Stochastic Tensor Method has complexity $T = O\left(\frac{1}{\e^{1/(p + 1)}}\right)$.
    \end{corollary}
    
    Corollaries \ref{col:offline}, \ref{col:offline_acc} are the counterpart of Corollaries \ref{col:online}, \ref{col:online_acc} and shows the analogical result the offline setting. 
   
\section*{Acknowledgments}
The work of A. Agafonov, and D. Kamzolov
was supported by a grant for research centers in the field of artificial intelligence, provided by the Analytical Center for the Government of the Russian Federation in accordance with the subsidy agreement (agreement identifier 000000D730321P5Q0002) and the agreement with the Moscow Institute of Physics and Technology dated November 1, 2021 No. 70-2021-00138.

The authors are very grateful to Yu.E. Nesterov, \fbox{A.I. Golikov}, Yu.G. Evtushenko and G. Scutari for fruitful discussions.
    
\bibliography{bibliography}

\end{document}